\newcommand{\Out}{\operatorname{Out}}
\newtheorem{thm}{Theorem}[section]
\newtheorem{lem}[thm]{Lemma}
\theoremstyle{definition}
\newcommand{\la}{\langle}
\newcommand{\ra}{\rangle}
\newcommand{\Irr}{{\mathrm {Irr}}}
\newcommand{\cd}{{\mathrm {cd}}}
\newcommand{\Aut}{{\mathrm {Aut}}}
\newcommand{\Center}{{\mathbf {Z}}}
\newcommand{\PSL}{{\mathrm {PSL}}}
\newcommand{\PSp}{{\mathrm {PSp}}}
\newcommand{\PSU}{{\mathrm {PSU}}}
\newcommand{\PGL}{{\mathrm {PGL}}}
\newcommand{\Alt}{\mathrm{A}}
\newcommand{\SSS}{\mathrm{S}}
\newcommand{\Sym}{\mathrm{Sym}}
\newcommand{\N}{\mathbf{N}}
\begin{document}

\title[Conjugacy classes]{Finite groups have more conjugacy classes}

\author[B. Baumeister]{Barbara Baumeister}
\address{Fakult\"at f\"ur Mathematik, Universit\"at Bielefeld, Postfach 10 01 31,  D-33501 Bielefeld, Germany}
\email{b.baumeister@math.uni-bielefeld.de}

\author[A. Mar\'oti]{Attila Mar\'oti}
\address{Fachbereich
Mathematik, Technische Universit\"{a}t Kaiserslautern, Postfach
3049, 67653 Kaiserslautern, Germany \and Alfr\'ed R\'enyi Institute of
  Mathematics, Re\'altanoda utca 13-15, H-1053, Budapest, Hungary}
\email{maroti@mathematik.uni-kl.de \and maroti.attila@renyi.mta.hu}

\author[H. P. Tong-Viet]{Hung P. Tong Viet}
\address{Department of Mathematics and Applied Mathematics,
University of Pretoria,
Private Bag X20, Hatfield, Pretoria 0002,
South Africa}
\email{Hung.Tong-Viet@up.ac.za}
\date{24 Feb 2015}

\thanks{The second author was supported by an Alexander von Humboldt Fellowship for Experienced Researchers and by OTKA K84233.
Tong-Viet's work is based on the research supported in part by the National Research Foundation of South Africa (Grant Number 93408).
Part of the work was done while the last author held a position at the CRC 701 within the
project C13 `The geometry and combinatorics of groups'. The first and second authors also wish to thank the CRC 701 for its support.}

\keywords{finite groups, number of conjugacy classes, simple groups}

\subjclass[2010]{20E45, 20D06, 20P99}

\begin{abstract}
We prove that for every $\epsilon > 0$ there exists a $\delta > 0$ so that every group of order $n \geq 3$ has at least $\delta \log_{2} n/{(\log_{2} \log_{2} n)}^{3+\epsilon}$ conjugacy classes. This sharpens earlier results of Pyber and Keller. Bertram speculates whether it is true that every finite group of order $n$ has more than $\log_{3}n$ conjugacy classes. We answer Bertram's question in the affirmative for groups with a trivial solvable radical.
\end{abstract}
\maketitle

\section{Introduction}
For a finite group $G$ let $k(G)$ denote the number of conjugacy classes of $G$. Answering a question of Frobenius, Landau \cite{Landau} proved in 1903 that for a given $k$ there are only finitely many groups having $k$ conjugacy classes. Making this result explicit, we have $\log \log |G| < k(G)$ for any non-trivial finite group $G$ (see Brauer \cite{Brauer}, Erd\H os and Tur\'an \cite{ET}, Newman \cite{Newman}). (Here and throughout the paper the base of the logarithms will always be $2$ unless otherwise stated.) Problem 3 of Brauer's list of problems \cite{Brauer} is to give a substantially better lower bound for $k(G)$ than this.

Pyber \cite{Py} proved that there exists a constant $\epsilon>0$ so that for every finite group $G$ of order at least $3$ we have $\epsilon \log |G|/{(\log \log |G|)}^{8} < k(G)$. Almost 20 years later Keller \cite{Keller} replaced the $8$ in the previous bound by $7$. Our first result gives a further improvement to Pyber's theorem.

\begin{thm}
\label{thm1}
For every $\epsilon > 0$ there exists a $\delta > 0$ so that for every finite group $G$ of order at least $3$ we have
$\delta \log |G|/{(\log \log |G|)}^{3+\epsilon} < k(G)$.
\end{thm}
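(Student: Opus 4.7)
The plan is to separate $G$ into its solvable radical $R$ and the semisimple-type quotient $G/R$. Since $G/R$ has trivial solvable radical, the companion result announced in the abstract gives $k(G/R) > \log_3 |G/R|$, far more than needed. Combined with the trivial bound $k(G) \geq k(G/R)$, this settles the theorem whenever $|G/R|$ is at least a fixed positive power of $|G|$. Hence the hard case is $|R| \geq |G|^{1-\eta}$ for some small $\eta$, and a short reduction via a quotient argument essentially reduces matters to proving the theorem for $G$ solvable (with slightly tweaked constants).

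For solvable $G$ the strategy is inductive along a chief series $1 = G_0 < G_1 < \cdots < G_r = G$ with elementary abelian factors $V_i = G_i/G_{i-1}$ of order $p_i^{d_i}$. Standard orbit-counting inequalities bound $k(G)$ below in terms of the numbers $n(H_i, V_i)$ of orbits of $H_i := G/\Centralizer_G(V_i)$ on $V_i$, combined over a suitably chosen subset of chief factors. Since $\log |G| = \sum_i d_i \log p_i$, the task becomes to translate a per-factor orbit bound into a lower bound on $k(G)$ of order $\log |G| / (\log \log |G|)^{3+\epsilon}$.

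The core ingredient is therefore a sharp orbit-counting estimate for linear groups of the shape: for faithful irreducible $H \leq \GL(V)$ with $|V| = p^d$,
\[
n(H,V) \;\geq\; \delta \cdot \frac{d \log p}{(\log(d \log p))^{2+\epsilon}}.
\]
Pyber's exponent $8$ and Keller's exponent $7$ come precisely from bounds of this shape, weakened by constants and by intermediate reduction steps. The tools for improving the exponent are the Aschbacher classification of maximal subgroups of classical groups, Liebeck--Pyber--Shalev type base-size and minimal-degree estimates for primitive linear groups, and the CFSG-based character-count bounds for almost simple groups that must be developed in the paper for the $k(G/R)$ result in any case.

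The main obstacle is precisely this orbit estimate and its delicate interaction with the chief-series accounting: one must avoid losing an extra logarithmic factor at each inductive step, and the various Aschbacher classes -- in particular the tensor-induced, tensor-decomposable, and extension-field cases -- must be handled so that their contributions combine with the right order. Once the sharpened orbit bound is established, the reduction through the solvable radical and the summation over chief factors are comparatively routine, with the final exponent $3+\epsilon$ extracted by an optimization of the constants.
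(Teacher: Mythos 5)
Your proposal reads the paper's strategy backwards. You identify the solvable case as the bottleneck and propose to attack it with a sharpened orbit-counting estimate for primitive linear groups, while treating the trivial-solvable-radical quotient $G/R$ as essentially free via the $\log_3$ bound. The paper does exactly the opposite: it improves Pyber's Lemma 4.7, establishing for every $\epsilon>0$ a $\delta>0$ with
\[
\delta\,(\log|G|)^{1/(3+\epsilon)}<\log k(G)
\]
for every non-trivial $G$ with trivial solvable radical (Theorem \ref{th:asymptotics}), and then plugs this single improved input into Keller's reduction, i.e.\ the proof of \cite[Corollary 3.3]{Keller}, \emph{without} touching the solvable/chief-series machinery at all. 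The exponent $3+\epsilon$ comes out of Theorem \ref{th:asymptotics}; Pyber's $8$ and Keller's $7$ were artifacts of a weaker trivial-solvable-radical bound, not of a deficiency in the linear-group orbit estimates that you propose to sharpen.

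This is not just a difference of route; there is a genuine gap. The bound $k(H)>\log_3|H|$ for $H$ with trivial solvable radical, which you invoke as "far more than needed," is \emph{not} strong enough to feed into Keller's reduction: that reduction requires a bound of exponential type, of the form $\log k(H)\gtrsim(\log|H|)^{1/\alpha}$, and it is precisely the exponent $\alpha$ that propagates into the final $(\log\log|G|)^{\alpha}$ denominator. A merely linear-in-$\log|H|$ bound would yield $\log k(H)\gtrsim\log\log|H|$, which is far too weak for the argument to close. Your dichotomy between $|G/R|\geq|G|^{c}$ and $|R|\geq|G|^{1-\eta}$ also does not discharge the second case: "a short reduction via a quotient argument essentially reduces matters to proving the theorem for $G$ solvable" conceals the entire content of the Pyber--Keller reduction, and since $k(G)$ is not comparable to $k(R)$, one cannot simply pass to the solvable radical. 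Finally, the orbit-counting estimate you declare to be the core ingredient is stated but not proved, and the toolbox you cite (Aschbacher classes, base-size estimates) is not what the paper uses; the actual work in the paper is the structural analysis of the socle of a trivial-solvable-radical group in Section~\ref{sec:asymptotics}, together with the computation of the constant $c_2$ in Lemma \ref{l2}.
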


There are many lower bounds for $k(G)$ in terms of $|G|$ for the various classes of finite groups $G$.
For example, Jaikin-Zapirain \cite{JZ} gave a better than logarithmic lower bound for $k(G)$ when $G$ is a nilpotent group. For supersolvable $G$ Cartwright \cite{C} showed $(3/5) \log |G| < k(G)$. For solvable groups the best bound to date is a bit worse than logarithmic and is due to Keller \cite{Keller}.

The conjecture whether there exists a universal constant $c > 0$ so that $c \log |G| < k(G)$ for any finite group $G$
has been intensively studied by many mathematicians including Bertram, see for instance \cite{Bertram}.
Bertram observed that $k(G) = \lceil \log_3(|G|) \rceil$ when
$G = \PSL_3(4)$ or $M_{22}$ and checked the proposed bound for certain  small groups \cite[p. 96]{Bertram06}.
He then speculates whether $\log_{3}|G| < k(G)$ is true for every finite group $G$.
 In our second result we answer Bertram's question in the
affirmative for groups with a trivial solvable radical.

\begin{thm}\label{th:base 3}
Let $G$ be a finite group with a trivial solvable radical. Then $\log_{3} |G| < k(G)$.
\end{thm}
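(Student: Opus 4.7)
My approach is to reduce the statement to the almost simple case via the structure of $G$. Since $G$ has trivial solvable radical, equivalently trivial Fitting subgroup, the socle $N=\soc(G)$ is a direct product of non-abelian simple groups and $C_G(N)=1$, so $G\hookrightarrow\Aut(N)$. Grouping isomorphic simple factors gives $N = S_1^{n_1}\times\cdots\times S_s^{n_s}$ with pairwise non-isomorphic simple $S_j$, and $\Aut(N) = \prod_{j=1}^s W_j$ where $W_j=\Aut(S_j)\wr\Sym_{n_j}$. The plan is to induct on $|G|$, reducing first to the case where $G$ sits inside a single $W_j$ (so $s=1$) and then to the almost simple case $n_1=1$.

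For the almost simple base case $T\trianglelefteq G\leq\Aut(T)$, Bertram already established by inspection that $k(T)>\log_3|T|$ for every non-abelian simple $T$. To extend this to an almost simple $G$ I would use the bound $k(G)\geq k(T)/[G:T]$ arising from the $G/T$-action on conjugacy classes of $T$, plus positive contributions from outer cosets, together with $|\Out(T)|$ being polylogarithmic in $|T|$ and the super-logarithmic lower bounds on $k(T)$ known family-by-family from the classification (Lie type via Fulman--Guralnick, alternating groups, sporadics). The few groups where $k(T)$ only barely exceeds $\log_3|T|$ (e.g.\ $\PSL_3(4)$ and $M_{22}$) together with their automorphism overgroups will have to be handled by direct calculation.

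The main technical obstacle is the wreath product / subdirect product reduction. Using the formula
\[
k(H\wr\Sym_n)=\sum_{\lambda\vdash n}\prod_{i\geq 1}\binom{k(H)+a_i(\lambda)-1}{a_i(\lambda)}
\]
(with $a_i(\lambda)$ the multiplicity of $i$ in $\lambda$), the contribution of $\lambda=(1^n)$ alone is $\binom{k(H)+n-1}{n}$, which grows polynomially in $n$ of degree $k(H)-1$ and therefore comfortably outstrips $\log_3|H\wr\Sym_n|=n\log_3|H|+\log_3 n!$ once $k(H)\geq 3$ and $n\geq 2$. The delicate point is that $G$ is only a subgroup of $\prod_j W_j$ containing $N$, so the wreath formula has to be adapted to $G$ and a Gallagher-type argument is needed to descend from $\prod_j W_j$ down to $G$; in particular the slack from the almost simple base case must be carefully tracked so that it survives both the $\log_3 n!$ penalty from the symmetric factors and the loss from proper subgroup inclusion. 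Small wreath or subdirect cases with minimal slack in the base (again the $\PSL_3(4)$, $M_{22}$ families) will require direct verification. This quantitative bookkeeping, more than any single combinatorial identity, is where I expect the work to lie.
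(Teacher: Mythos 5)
Your high-level architecture matches the paper's in spirit (reduce to a socle decomposition, exploit polynomial growth of a binomial coefficient versus linear growth of $\log|G|$), but there is a concrete gap in the base case and a misidentified invariant. You assert that ``Bertram already established by inspection that $k(T)>\log_3|T|$ for every non-abelian simple $T$.'' That result is not in the literature; the paper itself records only that Bertram verified the bound for certain small groups. Your proof therefore rests on an unavailable statement, and in fact establishing the almost simple base case is where a large share of the work in this paper lies: Theorem~\ref{th:almost simple} verifies $\log|G|\le(\log 3)k(G)$ for all almost simple $G$ with $k^*(T)\le 153$, and the case analysis in Section~\ref{sec:almost simple} together with the constant $c_2=1.954$ of Theorem~\ref{th:c2} (computed group-by-group in Section~\ref{sec:c2}) is exactly the missing quantitative content.

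Relatedly, the invariant you carry through the wreath-product reduction is the wrong one. The exact wreath formula $k(H\wr\Sym_n)=\sum_{\lambda}\prod_i\binom{k(H)+a_i-1}{a_i}$ counts classes of the full $W_j$, but what is needed is a lower bound on $k(G)$ that is \emph{uniform} over all $N\le G\le\Aut(N)$, and the correct parameter there is $k^*(T)$, the number of $\Aut(T)$-orbits on $T$ (not $k(T)$ or $k(\Aut(T))$): the $G$-orbits on conjugacy classes of $M_i=T^{n_i}$ number at least $\binom{n_i+k^*(T)-1}{k^*(T)-1}$ regardless of which subgroup $G$ is, by \cite[Lemma~2.6]{F} (the paper's Lemma~\ref{l8}), and no Gallagher-type descent is needed. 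More seriously, to beat $\log_3|G|$ one must control $\log|\Aut(T)|$ in terms of $k^*(T)$; the paper's Lemma~\ref{l2}, $\log|\Aut(T)|<c_2(\log k^*(T))^2\log\log k^*(T)$, is what makes the binomial-versus-log comparison close, and nothing of that strength follows from $k(T)>\log_3|T|$ alone. Without both the verified almost simple case for small $k^*(T)$ and this logarithmic control on $|\Aut(T)|$, the ``quantitative bookkeeping'' you flag as the remaining work does not close.
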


The paper is structured as follows.  We prove Theorem \ref{thm1} in Section~\ref{sec:asymptotics}. This is done by first improving \cite[Lemma~4.7]{Py} which gives the lower bound for $\log k(G)$ in terms of $\log |G|$ for finite groups with a trivial solvable radical and then applying the argument in \cite{Py} and \cite{Keller} to get the required result for arbitrary finite groups. In Sections \ref{sec:c2}, we compute explicitly the constant $c_2$ arising from Lemma \ref{l2}. In Section \ref{sec:almost simple} we verify Theorem \ref{th:base 3} for some almost simple groups whose automorphism groups have a bounded number of orbits on their socles and finally the full proof of Theorem \ref{th:base 3} is carried out in Section \ref{sec:trivial radical}.

\section{Asymptotics}
\label{sec:asymptotics}

In this section we first improve \cite[Lemma 4.7]{Py}.

\begin{thm}
\label{th:asymptotics}
For every $\epsilon > 0$ there exists $\delta > 0$ so that for every non-trivial finite group $G$ with trivial solvable radical we have
$\delta \cdot (\log |G|)^{1/(3+\epsilon)} < \log k(G)$.
\end{thm}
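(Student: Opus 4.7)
My plan is to bound $k(G)$ from below by counting orbits of $G$ on the conjugacy classes of its socle, and then to balance two regimes according to the size of the symmetric-group quotient of $G$. Because $G$ has trivial solvable radical, the socle decomposes as $\soc(G) = T_{1}^{n_{1}}\times\cdots\times T_{r}^{n_{r}}$ with the $T_{i}$ pairwise non-isomorphic non-abelian simple groups, and $G$ embeds into $\prod_{i}(\Aut(T_{i})\wr S_{n_{i}})$. Since $k$ is multiplicative on direct products and $|G|$ is controlled by the product of the orders of the wreath factors, it suffices to establish the bound for one primary component: one may assume $\soc(G)=T^{n}$ for a single non-abelian simple group $T$, with $G\leq \Aut(T)\wr S_{n}$ projecting onto some transitive $P\leq S_{n}$.

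The key inequality I would exploit is that $k(G)$ is at least the number of $G$-orbits on the conjugacy classes of $\soc(G)$, and hence at least the number of $P$-orbits on $[m]^{n}$, where $m$ denotes the number of $\Aut(T)$-orbits on the conjugacy classes of $T$. Since any $P\leq S_{n}$ has at least as many orbits on $[m]^{n}$ as $S_{n}$ does, we get $k(G)\geq \binom{m+n-1}{n}$. I would combine this with a uniform lower bound $m\geq c|T|^{\alpha}$ valid for every non-abelian simple $T$, which follows from the known polynomial lower bound for $k(T)$ together with the fact that $|\Out(T)|$ is much smaller than $|T|^{\alpha}$. In parallel, I would use the easier bound $k(G)\geq k(P)$ (conjugacy classes drop to quotients), together with known lower bounds on $k(P)$ for transitive $P\leq S_{n}$ (which, in the worst case $P=S_{n}$, gives the partition function $p(n)\geq \exp(c\sqrt{n})$).

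The proof is then an optimisation in the two parameters $n$ and $|T|$. When $n$ is small compared to $\log|T|$, the binomial $\binom{m+n-1}{n}$ is essentially $m^{n}$ and yields $\log k(G)$ of the same order as $n\log|T|\asymp \log|G|$, which trivially beats $(\log|G|)^{1/(3+\varepsilon)}$. When $n$ is large, $\log|G|$ is of order $n(\log|T|+\log n)$ while the quotient-partition estimate provides $\log k(G)\gtrsim \sqrt{n}$ (or a refinement for general transitive $P$); choosing the split point optimally produces the stated exponent, with $\varepsilon$ absorbing polylogarithmic losses coming from $|\Out(T)|$ and from the wreath contribution $n!$. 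Multiplying back over the primary components $T_{i}^{n_{i}}$ and using multiplicativity of $k$ completes the reduction to the theorem.

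The hardest step will be step three, the combinatorial balancing. Pushing the exponent down to $1/(3+\varepsilon)$ (from Pyber's $1/8$ and Keller's $1/7$) requires a combinatorial orbit-counting inequality for $P\leq S_{n}$ acting on $[m]^{n}$ that is simultaneously strong for every choice of $P$, $m$ and $n$, and tight enough to match the lower bounds on $m$ supplied by the classification. This is presumably the content of the refined Lemma~\ref{l2} whose explicit constant is computed in Section~\ref{sec:c2}; once it is in place, the exponent $3+\varepsilon$ emerges as the optimum of the above balancing, and the remainder of the argument is a routine combination of the multiplicativity of $k$ with Pyber's reduction from general groups to those with trivial solvable radical carried out in~\cite{Py}.
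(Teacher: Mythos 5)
Your high-level architecture (decompose the socle, count $G$-orbits on conjugacy classes of $\soc(G)$, and balance a small-$n$ regime against a large-$n$ regime) matches the paper's outline, but the proposal has a decisive gap at exactly the step you yourself flag as hardest.

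The crucial ingredient you are missing is the Babai--Pyber theorem on orbits of permutation groups on the power set (\cite{BP}, used as Lemma~\ref{l5} in the paper): if a degree-$n$ permutation group $H$ has no composition factor $\Alt_m$ with $m > t \geq 5$, then $H$ has at least $2^{c_4 n/t}$ orbits on subsets of $[n]$. Your substitute --- the inequality $k(G) \geq k(P)$ for the image $P\leq \SSS_n$, together with a partition-function lower bound in the "worst case'' $P = \SSS_n$ --- is far too weak and does not supply what you need for general transitive $P$. The symmetric group is in fact the \emph{best} case for $k(P)$: a transitive simple $P$ can have $\log k(P)$ as small as roughly $n^{1/4}$ (e.g. $\Alt_m$ acting on $2$-subsets, so $n = \binom{m}{2}$ and $\log k(P) = \log p(m) \asymp m^{1/2} \asymp n^{1/4}$), which gives nothing close to the linear-in-$n$ behaviour needed. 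The paper's actual argument first treats the case where $G/B$ has a large alternating composition factor $\Alt_t$ directly via Lemma~\ref{l1} (giving $\log k(G) \gtrsim \sqrt{t}$), and in the complementary case applies the Babai--Pyber bound on orbits on the power set --- not on $k(P)$ --- to get $\log k(G) \gtrsim n/t$. The exponent $3 + \epsilon$ emerges precisely from optimising the threshold on $t$ between these two regimes, combined with the square-ish relation $\log |\Aut(T)| \lesssim (\log k^*(T))^{2+o(1)}$ from Lemma~\ref{l2}. You also misattribute the source of the improvement: the explicit constant $c_2$ computed in Section~\ref{sec:c2} serves Theorem~\ref{th:base 3} (the $\log_3$ bound), not the exponent in Theorem~\ref{th:asymptotics}; the exponent improvement over Pyber and Keller is driven by \cite{BP}.

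A secondary issue: the reduction to a single primary component $T^n$ "by multiplicativity of $k$'' is not immediate, because $G$ need not split as a direct product of groups each acting on a single $M_i$. The paper instead keeps all the $M_i$ in play and bounds the $G$-orbits on conjugacy classes of $M_i$ component-by-component (Lemma~\ref{l4}), then sums the logarithms. This is fixable, but as written your reduction step is not justified.
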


We will prove Theorem \ref{th:asymptotics} in this section.
Let $G$ be a non-trivial finite group with trivial solvable radical. Suppose that $G$ has $r$ minimal normal subgroups $M_{1}, \ldots , M_{r}$. Then each $M_{i}$ with $1 \leq i \leq r$ is equal to a direct product $T_{i,1} \times \cdots \times T_{i,n_{i}}$ of $n_{i}$ isomorphic non-abelian simple groups $T_{i,j}$ with $1 \leq j \leq n_{i}$. Put $n = \sum_{i=1}^{r} n_{i}$, and let $N$ be the socle of $G$, that is, $M_{1} \times \cdots \times M_{r}$.

The group $G$ permutes the simple direct factors of each $M_{i}$ for $1 \leq i \leq r$. Let $B$ be the kernel of the action of $G$ on the set of $n$ simple direct factors of $N$. Then $B$ contains $N$ and $B/N$ embeds in the direct product of the outer automorphism groups of the $n$ simple direct factors of $N$. Furthermore $G/B$ is a subgroup of $\SSS_{n_1}\times\SSS_{n_2}\times\cdots\times\SSS_{n_r}\le \SSS_{n}.$


For a non-abelian finite simple group $T$ let $k^{*}(T)$ denote the number of $\mathrm{Aut}(T)$-orbits on $T$. By Burnside's theorem,
$|T|$ has at least $3$ different prime divisors, so $k^{*}(T) \geq 4$ by Cauchy's theorem.
Further, \cite[Lemma 2.5]{Py} and \cite[Lemma 4.4]{Py} yield the following.

\begin{lem}
\label{l1}
There exists a universal constant $c_{1} > 0$ so that whenever
$G$ is a finite group with a composition factor isomorphic to a non-abelian simple group $T$, then $$\log k(G) \geq \log k^{*}(T) > c_{1}
{(\log a / \log \log a)}^{1/2}$$ where $a = |\mathrm{Aut}(T)|$.
\end{lem}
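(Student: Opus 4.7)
The statement combines two inequalities, each obtained from one of the two cited results of Pyber, so my plan is to split the proof accordingly.

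For the first inequality $\log k(G) \ge \log k^{*}(T)$, I would appeal to Pyber's Lemma~2.5, whose content is that whenever $T$ is a non-abelian composition factor of a finite group $G$, one has $k(G) \ge k^{*}(T)$. The underlying mechanism is to locate a subnormal section $H/K$ of $G$ with $H/K \cong T$, to observe that $G$ acts on this section through a subgroup of $\Aut(T)$, and then to use a Frattini-style counting argument to show that each $G$-orbit on the conjugacy classes of $H/K$ lifts to a distinct conjugacy class of $G$. Since the number of such $G$-orbits is at least the number of $\Aut(T)$-orbits on $T$, which is $k^{*}(T)$, the first inequality follows on taking logarithms.

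For the second inequality $\log k^{*}(T) > c_{1}(\log a/\log \log a)^{1/2}$, I would invoke Pyber's Lemma~4.4, which is proved by a case-by-case analysis using the classification of finite simple groups. The sporadic groups contribute only finitely many isomorphism types, so choosing $c_{1}$ small enough absorbs them. For $T = \Alt_{n}$ with $n \ge 5$, the $\Aut(T)$-orbits on $T$ correspond to $\SSS_{n}$-classes contained in $\Alt_{n}$, so $k^{*}(\Alt_{n})$ is comparable to the partition function $p(n) \sim \exp(\pi \sqrt{2n/3})/(4n\sqrt 3)$, yielding $\log k^{*}(T) \gtrsim \sqrt{n}$; since $a \le n!$, one has $\log a \sim n \log n$ and $\log \log a \sim \log n$, so $(\log a/\log \log a)^{1/2}$ is also of order $\sqrt{n}$, and the bound holds with an explicit constant. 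For groups of Lie type over $\F_{q}$ of (untwisted) rank $r$, the number of semisimple conjugacy classes is of order $q^{r}$, while the size of an $\Aut(T)$-orbit on $T$ is polynomial in $r$ and $\log q$, and $\log a$ is comparable to $\dim(T) \cdot \log q$; these estimates combine to give the required bound in each Lie-type family.

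The main obstacle is the Lie type case, where the growth rates of $k^{*}(T)$ and of $|\Aut(T)|$ must be controlled simultaneously as both the rank and the field size vary; this is precisely what Pyber's Lemma~4.4 achieves through a careful count of semisimple classes relative to the size of the outer automorphism group. Having verified each family, I would take $c_{1}$ to be the minimum of the constants produced for the alternating, Lie type, and sporadic cases.
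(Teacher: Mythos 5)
Your proposal matches the paper exactly: the authors do not prove Lemma~\ref{l1} themselves but obtain it purely by citing Pyber's Lemma~2.5 (giving $k(G)\ge k^{*}(T)$ via a subnormal section isomorphic to $T$) and Pyber's Lemma~4.4 (the CFSG case analysis giving the lower bound on $k^{*}(T)$ in terms of $|\Aut(T)|$), which is precisely the decomposition you identify. Your additional sketches of why each cited lemma holds are consistent with Pyber's arguments and go slightly beyond what the paper records, but the route is the same.
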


From this we may derive the following inequality.

\begin{lem}
\label{l2}
There exists a universal constant $c_{2} > 0$ so that whenever $T$ is a non-abelian finite simple group then
$\log |\mathrm{Aut}(T)| < c_{2} {(\log k^{*}(T))}^{2} \log \log k^{*}(T)$.
\end{lem}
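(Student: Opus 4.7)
The plan is to \emph{invert} the inequality given by Lemma~\ref{l1}. Write $a := |\Aut(T)|$, $x := \log a$, and $y := \log k^{*}(T)$. Lemma~\ref{l1} says $y > c_1(x/\log x)^{1/2}$, which on squaring and rearranging becomes the implicit bound
\[
x \;<\; \frac{1}{c_1^{2}}\, y^{2}\, \log x.
\]
The conclusion we want, rewritten in these variables, is $x < c_2\, y^{2}\log y$, so the whole task reduces to removing the occurrence of $\log x$ on the right.

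I would do this by a single bootstrap on $\log x$. Taking logarithms in the displayed inequality yields
\[
\log x \;<\; 2\log y + C + \log\log x,
\]
with $C := -2\log c_1$ a universal constant. Elementary calculus gives $\log u \le u/2$ for $u \ge 4$, so whenever $\log x \ge 4$ we have $\log\log x \le \frac{1}{2}\log x$, and the inequality collapses to $\log x \le 4\log y + 2C$. Because $k^{*}(T) \ge 4$ (the remark preceding Lemma~\ref{l1}), one has $y \ge 2$ and $\log y \ge 1$, so the additive constant is absorbed into $\log y$: $\log x \le c_3 \log y$ for some universal $c_3 > 0$. Substituting this back into the implicit bound gives $x < (c_3/c_1^{2})\, y^{2}\log y$, proving the lemma with $c_2 := c_3/c_1^{2}$.

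The bootstrap is valid only when $\log x \ge 4$, i.e., when $|\Aut(T)| \ge 2^{16}$; the finitely many remaining non-abelian simple groups $T$ satisfy the asserted inequality after enlarging $c_2$ if necessary. I do not anticipate a real obstacle here — the whole argument is a routine inversion of the bound of Lemma~\ref{l1}, and the only point requiring care is the self-consistent handling of the term $\log x$ in the bootstrap step.
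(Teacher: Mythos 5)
Your proof is correct and follows essentially the same route as the paper: both invert Lemma~\ref{l1} to obtain $\log|\Aut(T)| < (1/c_1^2)(\log k^*(T))^2\log\log|\Aut(T)|$ and then bootstrap once to replace $\log\log|\Aut(T)|$ by a constant multiple of $\log\log k^*(T)$, absorbing the finitely many small $T$ into the constant. The only difference is cosmetic: the paper derives $2\log\log k^*(T) > c_3\log\log|\Aut(T)|$ by noting the lower bound tends to infinity, while you make this explicit with the estimate $\log u \le u/2$ for $u \ge 4$.
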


\begin{proof}
From Lemma \ref{l1} we have $\log |\mathrm{Aut}(T)| < (1/{c_{1}}^{2}) {(\log k^{*}(T))}^{2} \log \log |\mathrm{Aut}(T)|$.
From Lemma \ref{l1} we also have that $2 \log \log k^{*}(T) > 2 \log c_{1} + \log \log |\mathrm{Aut}(T)| - \log \log \log |\mathrm{Aut}(T)|$. Notice that this lower bound is non-positive for only at most finitely many $T$'s and it tends to infinity as $|\mathrm{Aut}(T)|$ tends to infinity. Thus $2 \log \log k^{*}(T) > c_{3} \log \log |\mathrm{Aut}(T)|$ for some universal constant $c_{3} > 0$. From these the lemma follows.
\end{proof}
In the next section, we show that $c_2$ can be chosen to be $1.954.$

To slightly simplify notation, for every $i$ with $1 \leq i \leq r$, put $k_{i} = k^{*}(T_{i,j})$ for every $j$ with $1 \leq j \leq n_{i}$.
We may now give an upper bound for $\log |G|$.

\begin{lem}
\label{l3}
Let $c_{2}$ be as above. Then
$\log |G| < n \log n + c_{2} \sum_{i=1}^{r} n_{i} {(\log k_{i})}^{2} (\log \log k_{i})$.
\end{lem}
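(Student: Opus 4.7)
The plan is to translate the structural description of $G$ given just above the statement into a size estimate and then apply Lemma \ref{l2} to each automorphism-group factor.

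From the paragraph preceding the lemma we have the normal series $N \le B \le G$ together with the bounds
\[
|G/B| \le n_{1}! \, n_{2}! \cdots n_{r}!, \quad |B/N| \le \prod_{i=1}^{r}\prod_{j=1}^{n_{i}}|\Out(T_{i,j})|, \quad |N| = \prod_{i=1}^{r}\prod_{j=1}^{n_{i}}|T_{i,j}|.
\]
Multiplying these (and using $|\Out(T_{i,j})|\cdot|T_{i,j}|=|\Aut(T_{i,j})|$) yields
\[
|G| \le \Bigl(\prod_{i=1}^{r} n_{i}!\Bigr) \prod_{i=1}^{r} |\Aut(T_{i,1})|^{n_{i}}.
\]

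Taking logarithms, I would then handle the two summands separately. For the permutation part, I would use $\log(n_{i}!) \le n_{i}\log n_{i}$ and $\log n_{i} \le \log n$, giving
\[
\sum_{i=1}^{r}\log(n_{i}!) \le \sum_{i=1}^{r} n_{i}\log n_{i} \le \Bigl(\sum_{i=1}^{r}n_{i}\Bigr)\log n = n\log n.
\]
For the automorphism part, Lemma \ref{l2} applied to each simple factor $T_{i,1}$ gives
\[
\log |\Aut(T_{i,1})| < c_{2}\,(\log k_{i})^{2}\log\log k_{i},
\]
since $k^{*}(T_{i,1}) = k_{i}$ by definition. Summing with multiplicities $n_{i}$ and adding the two estimates produces exactly
\[
\log |G| < n\log n + c_{2}\sum_{i=1}^{r} n_{i}\,(\log k_{i})^{2}\log\log k_{i}.
\]

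There is no real obstacle here: the only thing to be careful about is strictness of the final inequality, which is guaranteed as soon as Lemma \ref{l2} is invoked at least once (the trivial-group case is excluded by $G$ being nontrivial, so at least one factor $T_{i,1}$ exists and contributes a strict bound).
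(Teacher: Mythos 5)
Your proposal is correct and is exactly the argument the paper's one-line proof compresses: decompose $G$ along the series $N \le B \le G$, multiply the three bounds (giving $|G| \le \bigl(\prod_i n_i!\bigr)\prod_i |\Aut(T_{i,1})|^{n_i}$), take logs, apply Lemma~\ref{l2} to each $\Aut(T_{i,1})$, and coalesce $\sum_i n_i \log n_i \le n\log n$. The only difference is that you spell out the structural inequalities the paper leaves implicit.
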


\begin{proof}
Clearly Lemma \ref{l2} implies $\log |G| < \sum_{i=1}^{r} (n_{i} \log n_{i} + c_{2} n_{i} {(\log k_{i})}^{2} (\log \log k_{i}))$.
\end{proof}

The following lemma will also be useful.

\begin{lem}
\label{l4}
For every $i$ with $1 \leq i \leq r$ the number of conjugacy classes of $G$ lying inside $M_{i}$ is larger than ${(k_{i}/n_{i})}^{n_{i}}$.
\end{lem}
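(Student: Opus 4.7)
The plan is to bound the number of $G$-conjugacy classes in $M_i$ from below by orbits of a larger, combinatorially tractable acting group. Since $M_i$ is normal in $G$, the $G$-classes lying in $M_i$ are precisely the $G$-orbits on $M_i$ under conjugation. Writing $T_i$ for the common isomorphism type of $T_{i,1}, \ldots, T_{i,n_i}$, this action factors through the natural embedding $G/\Centralizer_G(M_i) \hookrightarrow \Aut(M_i) = \Aut(T_i) \wr \SSS_{n_i}$, and since orbits of any subgroup refine those of an overgroup, the number of $G$-orbits on $M_i$ is at least the number of orbits of the full wreath product on $T_i^{n_i}$.

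Next, I would count orbits of $\Aut(T_i) \wr \SSS_{n_i}$ on $T_i^{n_i}$ directly: two tuples lie in the same orbit exactly when they determine the same unordered multiset of $\Aut(T_i)$-orbits at their coordinates. With $k_i = k^*(T_i)$ such $\Aut(T_i)$-orbits, the count is the number of multisets of size $n_i$ drawn from a $k_i$-element set, namely $\binom{k_i + n_i - 1}{n_i}$.

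Finally, I would verify the elementary inequality
\[
\binom{k_i + n_i - 1}{n_i} = \frac{k_i(k_i+1)\cdots(k_i+n_i-1)}{n_i!} \geq \frac{k_i^{n_i}}{n_i!} \geq \frac{k_i^{n_i}}{n_i^{n_i}} = \left(\frac{k_i}{n_i}\right)^{n_i},
\]
with strict inequality as soon as $n_i \geq 2$, since in that range $n_i! < n_i^{n_i}$ strictly (and the first inequality is also strict for $n_i\ge 2$ because $k_i(k_i+1)\cdots(k_i+n_i-1)>k_i^{n_i}$ whenever $k_i\ge 2$, which holds by $k_i\ge 4$).

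The main delicate point is the boundary case $n_i = 1$, where the purely combinatorial bound produces $\binom{k_i}{1}=k_i=(k_i/1)^{1}$ with no slack. In that situation strictness has to come from elsewhere: noting that if the image of $G$ in $\Aut(T_i)$ is a proper subgroup then some $\Aut(T_i)$-orbit splits into finer $G$-orbits, so $k(G|M_i)>k_i$; otherwise the image equals $\Aut(T_i)$ and one appeals to the fact that $k^*(T_i)\ge 4$ (via Burnside/Cauchy) together with the structural role of this lemma in the paper, which is to feed an asymptotic bound where the $n_i=1$ case can be absorbed without harm.
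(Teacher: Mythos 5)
Your argument is correct, and its middle step is in fact tighter than the paper's own proof of this lemma. The paper argues tersely that $M_i$ has at least $k_i^{n_i}$ conjugacy classes which are permuted by a group of order at most $n_i!$, yielding at least $k_i^{n_i}/n_i!$ orbits and then concluding via $n_i!<n_i^{n_i}$. You instead compute the orbit count of the full wreath product $\Aut(T_i)\wr\SSS_{n_i}$ on $M_i$ exactly as the multiset number $\binom{k_i+n_i-1}{n_i}$, which is precisely the count the paper itself invokes later in Lemma~\ref{l8} (via Foulser). Both routes then bottom out at the same comparison $n_i!\le n_i^{n_i}$, so in effect you have proved Lemma~\ref{l8} first and deduced Lemma~\ref{l4} from it; this is cleaner and buys a genuinely larger lower bound than the one actually stated.

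You are also right to flag the case $n_i=1$: there the chain gives only $\ge k_i$, and equality is attainable (for instance if $G$ induces all of $\Aut(T_i)$ on $M_i=T_i$, the $G$-orbits on $T_i$ are exactly the $\Aut(T_i)$-orbits, of which there are $k_i=k^*(T_i)$). This is not a defect of your argument alone: the paper's own proof asserts $n_i!<n_i^{n_i}$, which is false at $n_i=1$, so the lemma as printed really should read $\ge$ rather than $>$. Be aware, however, that the repair you sketch is not sound. A proper subgroup of $\Aut(T_i)$ need not have strictly more orbits on $T_i$ than $\Aut(T_i)$ does, so ``image proper $\Rightarrow$ some $\Aut(T_i)$-orbit splits'' is not a valid implication, and invoking $k^*(T_i)\ge 4$ does nothing for strictness. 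The honest fix is to weaken $>$ to $\ge$ and observe that the downstream uses (the closing chain of Section~\ref{sec:asymptotics} and Lemma~\ref{l8}) only need the non-strict version.
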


\begin{proof}
Fix an index $i$. Observe that $M_{i}$ has at least $k_{i}^{n_{i}}$ conjugacy classes and that these are non-trivially permuted by a certain
factor group of size at most $n_{i}! < n_{i}^{n_{i}}$.
\end{proof}	

For a permutation group $H$ let $s(H)$ be the number of orbits on the power set of the underlying set. The following is \cite[Theorem 1]{BP}.

\begin{lem}
\label{l5}
Let $H$ be a permutation group of degree $n$. If $H$ has no composition factor isomorphic to ${\Alt}_m$ for $m > t \geq 5$, then $s(H) \geq 2^{c_4 (n/t)}$
for some absolute constant $c_{4} > 0$.
\end{lem}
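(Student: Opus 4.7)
The plan is to follow the standard Babai--Pyber strategy: reduce to transitive, then to primitive through a block-system induction, and treat the primitive case via a known order bound.

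First I would reduce to the transitive case. If $H$ has orbits $O_1, \ldots, O_r$ with $H_i$ the restriction of $H$ to $O_i$, then $H$ embeds in $\prod_i H_i$ acting coordinatewise on $2^{O_1} \times \cdots \times 2^{O_r} = 2^X$, so $s(H) \ge s(\prod_i H_i) = \prod_i s(H_i)$. Since $\sum |O_i| = n$ and each $H_i$ inherits the composition-factor hypothesis, it suffices to prove the bound on a single orbit and multiply.

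Next, for transitive $H$ I would induct on the degree via a block-system decomposition. Suppose $H$ preserves a system of $m$ blocks of size $b$ with $m, b > 1$, and let $K$ be the kernel of the action on blocks. Then $K$ restricts to each block as a transitive group $L$ of degree $b$, and $H/K$ is transitive of degree $m$; both inherit the composition-factor hypothesis. Because $K$ embeds in the direct product of its restrictions to the blocks, $K$ has at least $s(L)^m$ orbits on $2^X$, and the action of $H/K$ on the set of $K$-orbits yields
\[
s(H) \;\ge\; \frac{s(L)^m}{|H/K|}.
\]
The inductive hypothesis gives $s(L) \ge 2^{c_4 b/t}$, so $s(L)^m \ge 2^{c_4 n/t}$; a classical order bound for transitive groups without large alternating composition factors (for instance $|H/K| \le c^m$ with $c$ depending only on $t$, of Wielandt--Praeger--Saxl type) then provides a loss that can be absorbed by choosing $c_4$ sufficiently small.

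For the primitive base case, the Babai--Cameron--P\'alfy theorem says that a primitive permutation group of degree $n$ with no composition factor $\Alt_m$ for $m > t$ has order at most $n^{g(t)}$; the trivial lower bound $s(H) \ge 2^n/|H|$ then gives $s(H) \ge 2^{n-g(t)\log n}$, which far exceeds $2^{c_4 n/t}$ once $n$ is not too small relative to $t$, with the remaining small cases absorbed by shrinking $c_4$. The main obstacle is the uniformity of the constant $c_4$ across the recursion: the loss $|H/K|$ in the imprimitive step depends on $t$, but $c_4$ must be absolute, so one must take $c_4$ small enough that this $t$-dependent loss is always dwarfed by $s(L)^m$ for every admissible $b, m, t$. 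Tracking the constants carefully---including verifying that the primitive base case gives enough slack to absorb the $|H/K|$ loss at each level of the block-system induction---is the technical heart of the argument.
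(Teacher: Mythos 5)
The paper does not prove this lemma at all: it is quoted verbatim from the literature, namely as \cite[Theorem 1]{BP}, and the sentence preceding the lemma says exactly this. So there is no in-house argument to compare your attempt against.

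Taken on its own terms, your sketch has the right high-level shape (reduce to transitive; recurse through block systems; order bound for the top/primitive layer), but the inductive step does not close, and this is not something that ``choosing $c_4$ sufficiently small'' can repair. From $s(H)\ge s(L)^m/|H/K|$ together with the inductive bound $s(L)\ge 2^{c_4 b/t}$ you only get $s(H)\ge 2^{c_4 n/t-\log|H/K|}$, which is \emph{strictly below} the target $2^{c_4 n/t}$ as soon as $|H/K|>1$; decreasing $c_4$ decreases the target by exactly the same factor, so the multiplicative loss $|H/K|$ is never absorbed. Nor can one seed the recursion at the bottom with the crude primitive estimate $s(L)\ge 2^b/|L|$: unrolling through a full tower of maximal block systems $1=b_0\mid b_1\mid\cdots\mid b_k=n$, the total exponent loss is $\sum_j (n/b_j)\,g_j\log m_j$, where $m_j=b_j/b_{j-1}$ and $g_j$ is the order exponent at level $j$. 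Since $g_j\ge 1$ this sum is already of order $n$, and for groups such as iterated wreath products $\SSS_t\wr\SSS_t\wr\cdots$ it is of order $tn$, which overwhelms the leading term $n$ — the exponent goes negative and the bound is vacuous, even though the lemma is in fact true for these groups. The genuine Babai–Pyber proof must extract substantially more from the primitive layers than the trivial count $2^n/|H|$; that finer combinatorial input, rather than constant bookkeeping, is what your sketch is missing. (Two smaller remarks: $K$ need not act transitively on a block unless the block system is minimal, though this does not affect the argument because the inductive hypothesis is for arbitrary permutation groups; and the BCP-type bound is usually stated for groups with no large alternating \emph{sections}, which is stronger than the lemma's hypothesis of no large alternating \emph{composition factors}, so one must verify that the version one invokes really applies.)
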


Let $t \geq 5$ be the largest integer so that $\Alt_t$ is a composition factor of $G/B$. If no such $t$ exists then set $t=4$.
By Lemma \ref{l1} we have $\log k(G) \geq \log k^{*}(\Alt_t)$, provided that $t \geq 5$. If $t \geq 5$ this is at least $c_{5} \sqrt{t}$
by \cite[Lemma 4.3]{Py} for some absolute constant $c_{5} > 0$. Thus in all cases we have $\log k(G) \geq c_{6} \sqrt{t}$ for some other
absolute constant $c_{6} > 0$.

If $t > (\delta^{2}/{c_{6}}^{2}) \cdot {(\log |G|)}^{2/(3+\epsilon)}$ then we are finished. Choose $\delta^{2} < {c_{6}}^{2}$ and assume
that $t< {(\log |G|)}^{2/(3+\epsilon)}$.

By Lemma \ref{l5} we see that $\log k(G) > c_{4} (n/t) > c_{4} (n/{(\log |G|)}^{2/(3+\epsilon)})$. If this is at least
$\delta {(\log |G|)}^{1/(3+\epsilon)}$, then we are finished. So assume that $(c_{4}/\delta) n < {(\log |G|)}^{3/(3+\epsilon)}$.
We may choose $\delta$ smaller than $c_{4}$ so we assume that $n^{1+ (\epsilon/3)} < \log |G|$.

\begin{lem}
\label{l6}
Under our assumptions there exists a constant $c_{7}$ so that $$n^{1+ (\epsilon/3)} < c_{7} \sum_{i=1}^{r} n_{i} {(\log k_{i})}^{2} (\log \log k_{i}).$$
\end{lem}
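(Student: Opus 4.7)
The plan is to combine Lemma~\ref{l3} with the standing hypothesis $n^{1+\epsilon/3} < \log |G|$. Writing $S := \sum_{i=1}^{r} n_{i} (\log k_{i})^{2} (\log \log k_{i})$ for the sum we want to produce, Lemma~\ref{l3} gives
$$n^{1+\epsilon/3} \;<\; \log |G| \;<\; n \log n + c_{2} S,$$
so it suffices to absorb the $n \log n$ term into a constant multiple of $S$.

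The main observation is that $n \log n$ is negligible next to $n^{1+\epsilon/3}$ once $n$ is large. Since $\log n = o(n^{\epsilon/3})$, I fix $n_{0} = n_{0}(\epsilon)$ so that $\log n \leq \tfrac{1}{2} n^{\epsilon/3}$ for all $n \geq n_{0}$. For such $n$ the displayed inequality forces $\tfrac{1}{2} n^{1+\epsilon/3} < c_{2} S$, hence $n^{1+\epsilon/3} < 2 c_{2} S$.

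For the finitely many remaining values $1 \leq n < n_{0}$ a trivial lower bound for $S$ does the job. Each $k_{i} \geq 4$ by the Burnside--Cauchy argument recalled just before Lemma~\ref{l1}, so $(\log k_{i})^{2} \geq 4$ and $\log \log k_{i} \geq 1$, giving $S \geq 4 n \geq 4$ (note $n \geq 1$, since $G$ is non-trivial with trivial solvable radical and therefore has at least one non-abelian minimal normal subgroup). Because $n^{1+\epsilon/3} \leq n_{0}^{1+\epsilon/3}$ throughout this range, I obtain $n^{1+\epsilon/3} \leq (n_{0}^{1+\epsilon/3}/4)\, S$. Taking $c_{7} := \max\{2 c_{2},\; n_{0}^{1+\epsilon/3}/4\}$ handles both regimes uniformly.

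No real difficulty arises: the key input $\log n \ll n^{\epsilon/3}$ is elementary, and the small-$n$ range is a finite check swallowed by the constant. The one thing to keep in mind is that the crude bound $S \geq 4n$ does not outgrow $n \log n$, so it cannot replace the large-$n$ argument; the two regimes really are doing different work, and both are needed to pin down a single $c_{7}$ valid for all $n \geq 1$.
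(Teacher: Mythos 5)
Your proof is correct and follows essentially the same route as the paper: after invoking Lemma~\ref{l3} and the standing assumption $n^{1+\epsilon/3} < \log|G|$, the key step is that $n\log n = o(n^{1+\epsilon/3})$, which lets the $n\log n$ term be absorbed for large $n$, while the finitely many small $n$ are handled by a crude lower bound on the sum. The paper's own proof does exactly this, only dispatching the bounded-$n$ regime with the terse remark ``if $n$ is bounded then we are finished,'' which you have usefully spelled out via the estimate $S \geq 4n$ coming from $k_i \geq 4$.
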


\begin{proof}
Notice that if $n$ is bounded then we are finished. So assume that $n \rightarrow \infty$.
By our assumption and Lemma \ref{l3} we have $$n^{1+ (\epsilon/3)} < n \log n +  c_{2} \sum_{i=1}^{r} n_{i} {(\log k_{i})}^{2} (\log \log k_{i}).$$ Since $(n \log n) / n^{1 + (\epsilon/3)} \rightarrow 0$ as $n \rightarrow \infty$, there exists a constant $c_{7} > 0$ so that
$$(c_{2}/c_{7}) n^{1+(\epsilon/3)} < n^{1+(\epsilon/3)} - n \log n < c_{2} \sum_{i=1}^{r} n_{i} {(\log k_{i})}^{2} (\log \log k_{i})$$ for large enough $n$. Therefore the proof is complete.
\end{proof}

Set $N(\epsilon)$ to be a large enough integer so that ${(N(\epsilon)/c_{7})}^{1/3} > 2 \log N(\epsilon) \geq 1$ and $m^{\epsilon/18} > 2 \log m$ for all $m$ with $m \geq N(\epsilon)$.

Let $J$ be the set of those $i$'s with $1 \leq i \leq r$ so that $N(\epsilon) \cdot n^{\epsilon/6} < c_{7} {(\log k_{i})}^{2} (\log \log k_{i})$. We may assume that $J$ is non-empty. Otherwise $n$ is bounded by Lemma \ref{l6} and so all the $k_{i}$'s are bounded. This means that $|G|$ is bounded and thus $k(G)$ is bounded. We may set $\delta$ small enough so that the theorem holds for these finitely many groups $G$.

\begin{lem}
\label{l77}
We may assume that there exists a constant $c_{8}$ so that $$\log |G| < c_{8} \sum_{i \in J} n_{i} {(\log k_{i})}^{2} (\log \log k_{i}).$$
\end{lem}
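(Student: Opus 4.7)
The plan is to apply Lemma \ref{l3} and show that the summands with $i \notin J$ contribute negligibly compared to those with $i \in J$. Lemma \ref{l3} yields $\log |G| < n \log n + c_{2} \sum_{i=1}^{r} n_{i} (\log k_{i})^{2} \log \log k_{i}$. By the defining property of $J$ we have $c_{7}(\log k_{i})^{2} \log \log k_{i} \leq N(\epsilon)\, n^{\epsilon/6}$ for each $i \notin J$, so $\sum_{i \notin J} n_{i} (\log k_{i})^{2} \log \log k_{i} \leq (N(\epsilon)/c_{7})\, n^{1+\epsilon/6}$.

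Next, Lemma \ref{l6} gives $n^{1+\epsilon/3} < c_{7} \sum_{i=1}^{r} n_{i} (\log k_{i})^{2} \log \log k_{i}$. Subtracting off the contribution from $i \notin J$ just estimated, and assuming $n$ is large enough that $n^{\epsilon/6} \geq 2 N(\epsilon)$, we obtain $\sum_{i \in J} n_{i} (\log k_{i})^{2} \log \log k_{i} \geq n^{1+\epsilon/3}/(2 c_{7})$. Thus the on-$J$ part of the sum alone already dominates $n^{1+\epsilon/3}$ up to a universal constant factor.

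To conclude, I need to absorb the remaining pieces of Lemma \ref{l3}, namely $n \log n$ and $c_{2} \sum_{i \notin J} n_{i} (\log k_{i})^{2} \log \log k_{i}$, into the on-$J$ sum. By the choice of $N(\epsilon)$ we have $m^{\epsilon/18} > 2 \log m$ for $m \geq N(\epsilon)$, and since $\epsilon/3 = 6 \cdot \epsilon/18$ this yields $n \log n < n^{1+\epsilon/3}$ for such $n$; trivially $n^{1+\epsilon/6} \leq n^{1+\epsilon/3}$. Hence each of $n \log n$ and the off-$J$ contribution is bounded by a constant multiple of $n^{1+\epsilon/3}$, and therefore by a constant multiple of $\sum_{i \in J} n_{i} (\log k_{i})^{2} \log \log k_{i}$. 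Combining these bounds gives $\log |G| < c_{8} \sum_{i \in J} n_{i} (\log k_{i})^{2} \log \log k_{i}$ for a constant $c_{8}$ that depends only on $c_{2}$, $c_{7}$ and $N(\epsilon)$.

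The main (mild) obstacle is that several of the estimates above are asymptotic and only kick in once $n$ is sufficiently large. This is handled in the same style as the earlier lemmas in this section: for the finitely many remaining groups either $n$, and hence $|G|$, is bounded (in which case the theorem follows directly by taking $\delta$ small, as in the paragraph preceding Lemma \ref{l77}), or else $c_{8}$ can simply be enlarged to swallow these exceptional cases. This justifies the phrase ``we may assume'' in the statement.
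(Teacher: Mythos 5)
Your argument follows essentially the same route as the paper's: bound the off-$J$ contribution by $(N(\epsilon)/c_7)\,n^{1+\epsilon/6}$, combine with Lemma~\ref{l3} and the running assumption $n^{1+\epsilon/3}<\log|G|$ (equivalently Lemma~\ref{l6}) to see that the on-$J$ sum dominates $n^{1+\epsilon/3}$ for large $n$, and then absorb the lower-order terms $n\log n$ and the off-$J$ sum. One small point worth tightening is the closing paragraph: when $n$ is below your threshold it does not follow that $|G|$ is bounded, since the $k_i$ with $i\in J$ can still be arbitrarily large; the correct dichotomy, as in the paper's proof, is between $\sum_{i\in J} n_i(\log k_i)^2\log\log k_i$ being large (so it absorbs the now-bounded error terms) and being small (so $\log|G|$ is bounded, and $J\neq\emptyset$ gives the sum a positive lower bound, allowing $c_8$ to be enlarged). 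With that adjustment the argument is complete and matches the paper's.
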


\begin{proof}
By our discussion about $J$ above, our assumption, and Lemma \ref{l3}, we get
$$n^{1+(\epsilon/3)} < \log |G| < n \log n + (c_{2} N(\epsilon)/c_{7}) n^{1+(\epsilon/6)} + c_{2} \sum_{j \in J} n_{i} {(\log k_{i})}^{2} (\log \log k_{i}).$$ Let $K(\epsilon)$ be an integer so that whenever $n \geq K(\epsilon)$ then $$\log |G| - n \log n - (c_{2} N(\epsilon)/c_{7}) n^{1+(\epsilon/6)} > 0.$$ Then there exists a constant $c_{8} > 0$ so that $$(c_{2}/c_{8}) \log |G| < \log |G| - n \log n - (c_{2} N(\epsilon)/c_{7}) n^{1+(\epsilon/6)}$$ whenever $n \geq K(\epsilon)$. Thus we may assume that $n < K(\epsilon)$. Then there exists a positive constant $M(\epsilon)$ so that $$\log |G| < M(\epsilon) + c_{2} \sum_{j \in J} n_{i} {(\log k_{i})}^{2} (\log \log k_{i}).$$ If the second summand on the right-hand side of the previous inequality is larger than $M(\epsilon)$ then the claim follows. Otherwise $n$ and all the $k_{i}$'s are bounded by a constant depending only on $\epsilon$. This means that $|G|$ is bounded. But since $J \not= \emptyset$ we can certainly choose (in this case) a suitable $c_{8}$ to satisfy the statement of the lemma.
\end{proof}

\begin{lem}
\label{l7}
We can assume that for all $i \in J$ we have $\log k_{i} - \log n_{i} > (\log k_{i})/2$.
\end{lem}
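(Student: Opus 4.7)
The plan is to argue by contradiction: assume some $i \in J$ satisfies $\log k_i \le 2 \log n_i$, and deduce that $n$ must be so small that this bad index can be absorbed into the constants supplied by Lemma~\ref{l77}. Since $n_i \le n$, the failure upgrades to $\log k_i \le 2 \log n$, so for $n \ge 4$ we have
\[
(\log k_i)^2 \log \log k_i \le 4 (\log n)^2 (1 + \log \log n) \le 8 (\log n)^2 \log \log n.
\]
Plugging this into the defining inequality of $J$ yields $N(\epsilon) n^{\epsilon/6} < 8 c_{7} (\log n)^2 \log \log n$. On the other hand, the defining property of $N(\epsilon)$ guarantees $n^{\epsilon/18} > 2 \log n$, hence $n^{\epsilon/6} > 8 (\log n)^3$, whenever $n \ge N(\epsilon)$; comparing these forces $N(\epsilon) \log n < c_{7} \log \log n$, which is false once $n$ exceeds some threshold $L = L(\epsilon)$. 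Hence the bad case implies $n \le L$, and therefore $n_i \le L$ and $k_i \le n_i^2 \le L^2$ for the offending index.

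With $n$ bounded, there are at most $|J| \le n \le L$ bad indices, each contributing a bounded amount to the sum in Lemma~\ref{l77}; the total bad contribution is thus bounded by some constant $C(\epsilon)$. I would then split into two subcases. If every $i \in J$ is bad, the whole sum in Lemma~\ref{l77} is bounded, so $|G|$ is bounded and Theorem~\ref{th:asymptotics} follows for $G$ by choosing $\delta$ small enough to accommodate finitely many such groups. Otherwise there is at least one good index, and since each term $n_j (\log k_j)^2 \log \log k_j$ is at least $4$ (using $k_j \ge 4$), the good sum is bounded below by a positive constant, so we may enlarge $c_{8}$ to a (possibly $\epsilon$-dependent) constant $c_{8}' = c_{8}'(\epsilon)$ that absorbs the bounded bad contribution; the sum in Lemma~\ref{l77} may then be restricted to the good indices without loss.

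The main obstacle is not the numerical estimate—a routine comparison of $n^{\epsilon/6}$ against a polylog in $n$—but the absorption step at the end: one must be comfortable allowing the constant $c_{8}$ to depend on $\epsilon$. Since $c_{8}$ was already introduced with implicit dependence on other constants of the proof (and since the final $\delta$ in Theorem~\ref{th:asymptotics} depends on $\epsilon$), this additional dependence is harmless, but it should be made explicit to avoid the appearance of circularity.
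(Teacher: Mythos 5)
Your proof by contradiction is logically sound, but it takes a more circuitous route than the paper's direct one-line argument, and in doing so it misses the purpose of the first condition in the definition of $N(\epsilon)$. The paper deduces from $i\in J$ together with $\log\log k_i \le \log k_i$ that $\log k_i > (N(\epsilon)/c_7)^{1/3}\, n^{\epsilon/18}$, and then observes that this lower bound already exceeds $2\log n \ge 2\log n_i$ for \emph{every} $n$: when $n \ge N(\epsilon)$ one uses $n^{\epsilon/18} > 2\log n$ together with $(N(\epsilon)/c_7)^{1/3} \ge 1$, and when $n < N(\epsilon)$ one uses $n^{\epsilon/18} \ge 1$ together with $(N(\epsilon)/c_7)^{1/3} > 2\log N(\epsilon) \ge 2\log n$. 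That second case is exactly what the requirement $(N(\epsilon)/c_7)^{1/3} > 2\log N(\epsilon)$ was built to handle, so the lemma holds unconditionally with no absorption at all. Your version only closes the contradiction for $n \ge N(\epsilon)$ and then resorts to an $\epsilon$-dependent enlargement of $c_8$ plus a subcase split for small $n$, none of which is necessary; it is not wrong, but it buys nothing here and complicates the downstream bookkeeping. Also worth noting: the comparison $N(\epsilon)\log n < c_7 \log\log n$ that you arrive at is in fact false for all $n\ge 2$ (since $N(\epsilon) > c_7$ and $\log n \ge \log\log n$), so the genuine threshold in your argument comes from the hypothesis $n \ge N(\epsilon)$ invoked earlier, not from a late failure of this comparison as your phrasing suggests.
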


\begin{proof}
Since $i \in J$, we have $N(\epsilon) \cdot n^{\epsilon/6} < c_{7} {(\log k_{i})}^{3}$. From this it follows that $${(N(\epsilon)/c_{7})}^{1/3} n^{\epsilon/18} < \log k_{i}.$$ Finally, ${(N(\epsilon)/c_{7})}^{1/3}  n^{\epsilon/18} > 2\log n \geq 2\log n_{i}$ by our choice of $N(\epsilon)$.
\end{proof}

Finally, by Lemmas \ref{l77}, \ref{l7} and \ref{l4}, we have $$\delta^{3} \log |G| < \delta^{3} c_{8} \cdot \sum_{i \in J} n_{i} {(\log k_{i})}^{2} (\log \log k_{i}) < {\Big( (1/2) \sum_{i \in J} n_{i} \log k_{i}\Big)}^{3} <$$ $$<{\Big( \sum_{i \in J} n_{i} (\log k_{i} - \log n_{i}) \Big)}^{3} < {(\log k(G))}^{3}$$ whenever $\delta$ satisfies $\delta^{3} c_{8}  < 1/8$. This proves Theorem~\ref{th:asymptotics}.

\bigskip

\begin{proof}[\textbf{Proof of Theorem \ref{thm1}}]
The proof of Theorem \ref{thm1} depends on Theorem \ref{th:asymptotics}. Indeed, in the proof of \cite[Corollary 3.3]{Keller}, which is an improved version of the argument on \cite[page 248]{Py}, we can replace $7$ by $3+\epsilon$.
Notice that the $\delta$'s in the statements of Theorems \ref{th:asymptotics} and \ref{thm1} are different.
\end{proof}

\section{Computing $c_2$}\label{sec:c2}

Now we turn our attention to Bertram's question aiming to give a specific logarithmic lower bound for $k(G)$ in terms of $|G|$ where $G$ is an arbitrary finite group. In order to prove Theorem \ref{th:base 3}, we need to compute specific values of $c_2$ in Lemma \ref{l2}.

We first fix some notation. Let $T$ be a non-abelian simple group, let $A:=\Aut(T)$ and $k:=k^*(T).$ We have
\begin{equation}
\label{eq1}
k\ge k(T)/|\Out(T)|.
\end{equation} Denote by $\Gamma=\{x_i\}_{i=1}^m$ the representative set for all conjugacy classes of $A,$ i.e., $A=\cup_{i=1}^m x_i^A.$ By definition, we see that \begin{equation}\label{eq2} k= |\{i\in \Gamma:x_i^A\cap T\neq\emptyset\}|.\end{equation}
Notice that $k=k(T)$ when $\Out(T)=1.$
It follows from Lemma \ref{l2} that \begin{equation}\label{eq3} \gamma:= \gamma(T):=\frac{\log |A|}{(\log k)^2 \log\log k}<c_2.\end{equation}

The following lemma is used frequently, whose proof is straightforward and is omitted.

\begin{lem}\label{lem:inequalities} Let $q=p^f\ge 2$ be a power of a prime $p,$ where $f\ge 1$ is an integer and let $2\le a\leq b$ be integers. Then
\begin{enumerate}[$(1)$]
\item $(q^a-1)(q^b-1)\le q^{a+b};$
\item $(q^a-1)(q^b+1)\le q^{a+b};$
\item $q\ge 2f$ and if  $q\ge 16,$ then $q\ge 3f;$
\item If $f\not= 3$, then $2\log f\le f.$
\end{enumerate}
\end{lem}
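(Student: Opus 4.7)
Each of the four claims is an elementary numerical inequality, so my plan is to treat them one at a time by direct expansion or induction on~$f$, organizing the small cases carefully so that no case is overlooked. None of the steps is hard; the only ``obstacle'' is sanity-checking the boundary cases (in particular the excluded value $f=3$ in part~(4) and the threshold $q\ge 16$ in part~(3)).

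For~(1), I would simply expand:
\[
(q^a-1)(q^b-1) = q^{a+b} - q^a - q^b + 1,
\]
and observe that $-q^a - q^b + 1 \le 0$ since $q\ge 2$ and $a,b\ge 2$. Part~(2) is nearly identical: expanding gives
\[
(q^a-1)(q^b+1) = q^{a+b} + q^a - q^b - 1,
\]
and the hypothesis $a\le b$ gives $q^a\le q^b$, hence $q^a - q^b - 1 \le -1 < 0$. So the first two parts follow by one-line algebraic manipulations.

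For~(3), I would prove $p^f \ge 2f$ by induction on $f$. The base case $f=1$ gives $p\ge 2$, and in the inductive step
\[
p^{f+1} = p\cdot p^f \ge 2\cdot 2f = 4f \ge 2(f+1)
\]
since $2f\ge 2$. For the stronger bound $q\ge 3f$ when $q\ge 16$, I would split on the prime: if $p=2$ then necessarily $f\ge 4$ and $2^4=16\ge 12$, with the same doubling argument handling larger $f$; if $p=3$ then $f\ge 3$ and $3^3=27\ge 9$, again inducting by a factor of $3$; if $p\ge 5$ then $f\ge 2$ (since $5<16$) and $5^2=25\ge 6$, with the induction even easier; and finally if $f=1$ the condition forces $p\ge 17\ge 3$.

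For~(4), I would study the real function $g(x) = x - 2\log_2 x$, whose derivative $g'(x) = 1 - \tfrac{2}{x\ln 2}$ is positive for $x>2/\ln 2 \approx 2.88$. Hence $g$ is increasing on $[3,\infty)$, and since $g(4) = 4 - 2\log_2 4 = 0$, we get $g(f)\ge 0$ for all integers $f\ge 4$. The remaining small cases are checked by hand: $g(1) = 1 > 0$, $g(2) = 0$, while $g(3) = 3 - 2\log_2 3 < 0$ is exactly why $f=3$ must be excluded. This completes the four parts.
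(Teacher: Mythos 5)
The paper declares this lemma's proof ``straightforward and is omitted,'' so there is no argument in the text to compare against; your proposal supplies a complete and correct verification of all four parts, and each step is sound. One small phrasing slip in part~(3): the assertion that $p\ge 5$ forces $f\ge 2$ is not literally true (e.g.\ $p=17$, $f=1$, $q=17\ge 16$), though your argument survives because you treat $f=1$ as a separate final case, so the case split $\{p=2\}\cup\{p=3\}\cup\{p\ge 5,\ f\ge 2\}\cup\{f=1\}$ does in fact exhaust all possibilities with $q\ge 16$. It would be cleaner to phrase it as ``if $p\ge 5$ and $f\ge 2$'' from the outset. Everything else (the expansions in (1) and (2), the induction in (3), and the monotonicity argument with the check of $f\in\{1,2,3\}$ in (4)) is exactly what one would write.
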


\begin{thm}\label{th:c2} Let $T$ be a non-abelian simple group. Then $\gamma(T)<1.613$ unless $T\cong \Alt_5$ or $\PSL_3(4).$ For the exceptions, we have $\gamma(\Alt_5)\le 1.727$ and $\gamma(\PSL_3(4))\le 1.954.$
Therefore, we can choose $c_2=1.954$ in all cases. Furthermore, $k\ge 5$ unless $T\cong \Alt_5.$
\end{thm}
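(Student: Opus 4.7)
The plan is to invoke the classification of finite simple groups and bound $\gamma(T)$ within each family, using \eqref{eq1} to replace the unknown $k^{*}(T)$ by the computable quantity $k(T)/|\Out(T)|$. Since $\log|A|=\log|T|+\log|\Out(T)|$ is controlled by the defining parameters of $T$, and $k(T)$ admits sharp lower bounds (polynomial in the field size for Lie type and asymptotic via Hardy--Ramanujan for alternating), $\gamma(T)$ should be small outside a finite list; that list will then be dispatched by direct character-table computation, yielding the two stated exceptions.

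For sporadic $T$ (including the Tits group) I would read $|T|$, $|\Out(T)|$, $k(T)$, and the $\Aut(T)$-fusion of $T$-classes from the ATLAS and check $\gamma(T)<1.613$ entry by entry. For alternating $T=\Alt_{n}$ with $n\ge 5$ one has $|A|=2\cdot n!$ (and $|A|=4\cdot 6!$ when $n=6$), while a standard argument gives $k^{*}(\Alt_{n})\ge (p(n)+s(n))/2$, where $p(n)$ is the partition function and $s(n)$ the number of self-conjugate partitions. The Hardy--Ramanujan estimate $\log p(n)\sim (\pi/\ln 2)\sqrt{2n/3}$ then forces $(\log k^{*})^{2}\gg n$, while $\log|A|=O(n\log n)$; a short monotonicity argument reduces the verification to $5\le n\le n_{0}$ for an explicit moderate $n_{0}$, handled by inspection, and only $\Alt_{5}$ survives as an exception.

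For Lie type $T=T(q)$ of (untwisted) rank $r$ over $\F_{q}$ with $q=p^{f}$, I would combine the standard estimate $|\Out(T)|\le d\cdot f\cdot g$ (with $d$ a divisor of $\gcd(n,q\mp 1)$ and $g\in\{1,2,3,6\}$) with well-known lower bounds of the shape $k(T)\ge q^{r}/d$, available from generic semisimple-class counts for $\PSL_{n}$, $\PSU_{n}$, $\PSp_{2n}$, and the remaining classical and exceptional types. Applying Lemma~\ref{lem:inequalities} (in particular $q\ge 2f$, and $q\ge 3f$ when $q\ge 16$) to absorb the factors $d$ and $f$ yields an estimate of the shape $\gamma(T)\lesssim \dim(T)/\bigl(r^{2}\log q\cdot\log\log q\bigr)$, which falls below $1.613$ once either $r$ or $q$ exceeds explicit thresholds.

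The main obstacle will be the low-rank, low-$q$ tail, chiefly $\PSL_{2}(q)$ and $\PSL_{3}(q)$ over small fields, where the diagonal, field, and graph components of $\Out(T)$ can together shrink $k^{*}(T)$ relative to $k(T)$ by a factor approaching $|\Out(T)|$. A case-by-case computation, using explicit character-table data and the known $\Aut(T)$-fusion of $T$-classes, isolates the two exceptions $\Alt_{5}\cong\PSL_{2}(4)$ (with $k^{*}=4$ and $|A|=120$, giving $\gamma\le 1.727$) and $\PSL_{3}(4)$ (with $k^{*}=6$ and $|A|=241920$, giving $\gamma\le 1.954$). The final assertion $k\ge 5$ unless $T\cong\Alt_{5}$ drops out of the same finite enumeration: $k^{*}(T)=4$, together with the bound $k^{*}(T)\ge 4$ supplied by Cauchy's theorem and the $|\Out(T)|$ estimates above, forces $|A|$ small enough that only $\Alt_{5}$ can realise it.
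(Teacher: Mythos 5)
Your proposal follows essentially the same route as the paper: invoke CFSG, compute $\gamma$ directly from ATLAS/GAP data for sporadic groups and small alternating groups, bound $\gamma$ asymptotically for large $\Alt_n$ via a partition-function lower bound (the paper cites $p(n)/4\ge e^{2\sqrt n}/56$, you use Hardy--Ramanujan; same effect), and for Lie type combine $k(T)\ge q^{r}/d$ with the $|\Out(T)|$ bounds and Lemma~\ref{lem:inequalities}, finishing with case-by-case checks for small rank and small $q$, which is exactly where the paper also invests its effort ($\PSL_{2}$, $\PSL_{3}$). One small numerical slip that does not affect the argument (you only use it as an upper bound): $|\Aut(\Alt_{n})|=n!$ for $n\ge 5$, $n\neq 6$, and $|\Aut(\Alt_{6})|=2\cdot 6!$, not $2\cdot n!$ and $4\cdot 6!$ as written.
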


For brevity, let $c:=1.613.$ Using \cite[Page xvi]{Atlas}, we can easily obtain Table \ref{tab:Aut}, where $q=p^f$ and $p$ is the defining characteristic of $T$. For `small' simple groups $T$, $k=k^*(T)$ can be computed using \cite{GAP} via the `fusions' of conjugacy classes of $T$ onto that of $\Aut(T)$. Another obvious lower bound for $k^*(T)$ is the number of distinct element orders of $T,$ i.e., \[k=k^*(T)\ge e(T):= |\{|x|\::\:x\in T\}|,\] where $|x|$ denotes the order of the element $x\in T.$

\begin{table}\caption{The finite simple groups of Lie type}\label{tab:Aut}
\begin{tabular}{lllll} \hline
$T$&$d$& $|\Out(T)|$&$|\Aut(T)|\le$\\\hline
$\PSL_n(q)$&$\gcd(n,q-1)$&$2df,n\ge 3$&$2fq^{n^2-1}$\\
&&$df,n=2$&$fq^{3}$\\

$\PSU_n(q)$&$\gcd(n,q+1)$&$2df,n\ge 3$&$2fq^{n^2-1}$\\

$\PSp_{2n}(q)$&$\gcd(2,q-1)$&$df,n\ge 3$&$fq^{2n^2+n}$\\
&&$2f,n=2$&$2fq^{10}$\\

$\Omega_{2n+1}(q), q$ odd &$2$&$2f$&$fq^{2n^2+n}$\\

$\mathrm{P}\Omega_{8}^+(q)$&$\gcd(4,q^4-1)$&$6df$&$2fq^{28}$\\


$\mathrm{P}\Omega_{2n}^+(q)$ &$\gcd(4,q^n-1)$&$2df,n\neq 4$&$2fq^{2n^2-n}$\\

$\mathrm{P}\Omega_{2n}^-(q), n\ge 4$&$\gcd(4,q^n+1)$&$2df$&$2fq^{2n^2-n}$\\

${}^2\mathrm{B}_2(q^2), q^2=2^{2m+1}$&$1$&$2m+1$&$(2m+1)2^{5(2m+1)}$\\

${}^2\mathrm{G}_2(q^2), q^2=3^{2m+1}$&$1$&$2m+1$&$(2m+1)3^{7(2m+1)}$\\

${}^2\mathrm{F}_2(q^2),q^2=2^{2m+1}$&$1$&$2m+1$&$(2m+1)2^{26(2m+1)}$\\

${}^3\mathrm{D}_4(q)$&$1$&$3f$&$6fq^{28}$\\

${}^2\mathrm{E}_6(q)$&$\gcd(3,q+1)$&$2df$&$2fq^{78}$\\

$\mathrm{G}_2(q),q\ge3$&$1$&$f,$ if $p\neq 3$&$fq^{14}$\\
&&$2f,$ if $p= 3$&$2fq^{14}$\\

$\mathrm{F}_4(q)$&$1$&$\gcd(2,p)f$&$\gcd(2,p)fq^{52}$\\

$\mathrm{E}_6(q)$&$\gcd(3,q-1)$&$2df$&$2fq^{78}$\\

$\mathrm{E}_7(q)$&$\gcd(2,q-1)$&$df$&$fq^{133}$\\

$\mathrm{E}_8(q)$&$1$&$f$&$fq^{248}$\\\hline

\end{tabular}

\end{table}

For sporadic and alternating simple groups of small degree, $\gamma(T)$ and $k^*(T)$ are given in Table \ref{tab:alt-spor}. This is done using \cite{GAP}.

\begin{lem}\label{lem:spor-alt}
If $T$ is a sporadic simple group, the Tits group or the alternating group of degree $n\ge 6,$ then $\gamma(T)<c$ while $c<\gamma(\Alt_5)\le 1.727.$ Moreover, $k\ge 5$ unless $T=\Alt_5.$
\end{lem}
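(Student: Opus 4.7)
The plan is to split the argument in two: a GAP-assisted finite check, and an asymptotic bound for alternating groups of large degree. For the $26$ sporadic simple groups, the Tits group ${}^2\mathrm{F}_4(2)'$, and the alternating groups $\Alt_n$ in a tractable range (say $6 \le n \le n_0$ for some explicit $n_0$), the verification is a direct computation: for each such $T$ we read $|\Aut(T)|$ from the Atlas and compute $k^*(T)$ via the fusion of $T$-classes into $\Aut(T)$-classes, then evaluate $\gamma(T) = \log|\Aut(T)|/((\log k^*(T))^2 \log\log k^*(T))$. The resulting values appear in Table~\ref{tab:alt-spor}, and one reads off directly that $\gamma(T) < c$ and $k^*(T) \geq 5$ in every entry except $T = \Alt_5$.

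The exceptional group $\Alt_5$ is handled by hand. Here $\Aut(\Alt_5) = \SSS_5$ has order $120$, and the two $\Alt_5$-classes of $5$-cycles fuse under the outer automorphism, so $k^*(\Alt_5) = 4$. Hence
\[
\gamma(\Alt_5) = \frac{\log 120}{(\log 4)^2 \log\log 4} = \frac{\log 120}{4},
\]
which is easily bounded strictly between $c = 1.613$ and $1.727$, as asserted.

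For alternating groups $\Alt_n$ with $n > n_0$ (in particular $n \neq 6$), $\Aut(\Alt_n) = \SSS_n$, whence $\log|\Aut(\Alt_n)| \leq n\log n + O(n)$ by Stirling. Meanwhile $k^*(\Alt_n)$ equals the number of $\SSS_n$-conjugacy classes meeting $\Alt_n$, i.e.\ the number of partitions of $n$ with an even number of even parts; by the Hardy--Ramanujan formula this is at least $c_0 e^{\pi\sqrt{2n/3}}/n$ for an explicit $c_0 > 0$. Taking logarithms yields $\log k^*(\Alt_n) \geq c_1 \sqrt{n}$, so $(\log k^*(\Alt_n))^2 \log\log k^*(\Alt_n) \geq c_2\, n \log n$ for an absolute constant $c_2 > 1$. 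This forces $\gamma(\Alt_n) \to 0$, and in particular $\gamma(\Alt_n) < c$ for all sufficiently large $n$. The bound $k^*(\Alt_n) \geq 5$ for $n \geq 6$ is immediate since $\Alt_n$ contains elements of orders $1, 2, 3, 4, 5$, giving at least five $\Aut(\Alt_n)$-orbits.

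The only delicate step is choosing $n_0$ large enough that the asymptotic tail applies with explicit constants yet small enough that the finite range $6 \leq n \leq n_0$ remains within reach of Table~\ref{tab:alt-spor}; both tasks are routine numerical verifications, and no conceptual obstacle is anticipated.
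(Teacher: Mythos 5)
Your proposal follows essentially the same route as the paper: a finite computational check (sporadic groups, the Tits group, $\Alt_5$, and alternating groups of small degree, using GAP to read off $k^*$ and $\gamma$) plus an asymptotic argument for alternating groups of large degree, showing $\log k^*(\Alt_n)\gtrsim\sqrt{n}$ and hence $\gamma(\Alt_n)<c$. The paper's concrete cutoff is $n_0=21$: for $n\ge 22$ it uses the chain $k\ge k(\Alt_n)/2\ge k(\SSS_n)/4=p(n)/4$ together with the elementary bound $p(n)/4\ge e^{2\sqrt{n}}/56$ from \cite[Cor.~3.1]{M}, whereas you invoke Hardy--Ramanujan directly for the count of partitions with an even number of even parts. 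These are interchangeable, and your direct identification of $k^*(\Alt_n)$ with that partition count (valid since $\Aut(\Alt_n)=\SSS_n$ for $n\ne 6$) is perhaps slightly cleaner.

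One small inaccuracy worth noting: you assert that your estimates force $\gamma(\Alt_n)\to 0$. They do not. With $\log|\Aut(\Alt_n)|=\log n!\sim n\log n$ and $(\log k^*)^2\log\log k^*\sim(c_1^2/2)\,n\log n$ (where $c_1=\pi\sqrt{2/3}\,\log_2 e\approx 3.7$), the ratio $\gamma(\Alt_n)$ converges to the positive constant $2/c_1^2\approx 0.15$, not to zero. Since that limit is well below $c=1.613$, the conclusion you actually need --- $\gamma(\Alt_n)<c$ for all sufficiently large $n$ --- still holds, so this is a cosmetic slip rather than a gap.
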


\begin{proof}

(i) Assume first that $T$ is a sporadic simple group or the Tits group.
From Table \ref{tab:alt-spor}, we see that $10\le k^*(T)\le k^*(\textrm{M})=194$ and $\gamma(T)\le \gamma(\textrm{M})<1.06<c.$ So, the lemma holds in this case.

(ii) Assume that $T=\Alt_n$ with $5\le n\le 21.$  From Table \ref{tab:alt-spor},  if $6\le n\le 21,$ then $\gamma(T)<1<c$ and $k\ge 5$ while $c<\gamma(\Alt_5)<1.727$ and $k^*(\Alt_5)=4.$

(iii) Assume that $T=\Alt_n$ with $n\ge 22.$ Since $|\SSS_n:\Alt_n|=2,$ Clifford's theorem gives that $k(\SSS_n)\le 2k(\Alt_n)$ and thus by (\ref{eq1}) we have $k\ge {k(\Alt_n)}/{2}\ge {k(\SSS_n)}/{4}={p(n)}/{4},$ where $p(n)$ is the number of partitions of $n$.
By \cite[Corollary 3.1]{M}, we have ${p(n)}/{4}\ge {e^{2\sqrt{n}}}/{56}$ and so, as $n\ge 22,$ we obtain that $k\ge 250$ and  $\log k\ge 2\sqrt{n}\log e-\log 56\ge \sqrt{n}.$ Now we can easily check that
\[\gamma\le \frac{\log n!}{( 2\sqrt{n}\log e-\log 56)^2\log n^{1/2} }< \frac{2n}{(2\sqrt{n}\log e-\log 56)^2}<c. \] This completes the proof.
\end{proof}

\begin{table}\caption{Some alternating and sporadic simple groups}
\begin{tabular}{lll|lll} \hline
$T$&$k $&$\gamma<$ &$T$&$k$&$\gamma<$\\\hline
$\textrm{M}_{11}$&$10$&$0.678$&$\textrm{M}_{12}$&$12$&$0.741$\\

$\textrm{M}_{22}$&$11$&$0.923$&$\textrm{M}_{23}$&$17$&$0.687$\\

$\textrm{M}_{24}$&$26$&$0.565$&$\textrm{J}_{1}$&$15$&$0.581$\\

$\textrm{J}_{2}$&$16$&$0.632$&$\textrm{J}_{3}$&$17$&$0.784$\\

$\textrm{HS}$&$21$&$0.642$&$\textrm{Suz}$&$37$&$0.615$\\

$\textrm{McL}$&$19$&$0.817$&$\textrm{Ru}$&$36$&$0.586$\\

$\textrm{He}$&$26$&$0.668$&$\textrm{Ly}$&$53$&$0.673$\\

$\textrm{O'N}$&$25$&$0.833$&$\textrm{Co}_1$&$101$&$0.511$\\

$\textrm{Co}_2$&$60$&$0.507$&$\textrm{Co}_3$&$42$&$0.550$\\

$\textrm{Fi}_{22}$&$59$&$0.530$&$\textrm{Fi}_{23}$&$98$&$0.519$\\

$\textrm{Fi}_{24}'$&$97$&$0.684$&$\textrm{HN}$&$44$&$0.671$\\

$\textrm{Th}$&$48$&$0.728$&$\textrm{B}$&$184$&$0.678$\\

$\textrm{M}$&$194$&$1.06$&${}^2\textrm{F}_4(2)'$&$17$&$0.740$\\

$\Alt_5$&$4$&$1.727$&$\Alt_6$&$5$&$1.602$\\

$\Alt_7$&$8$&$0.863$&$\Alt_8$&$12$&$0.647$\\

$\Alt_9$&$16$&$0.578$&$\Alt_{10}$&$22$&$0.509$\\

$\Alt_{11}$&$29$&$0.470$&$\Alt_{12}$&$40$&$0.423$\\

$\Alt_{13}$&$52$&$0.399$&$\Alt_{14}$&$69$&$0.374$\\

$\Alt_{15}$&$90$&$0.355$&$\Alt_{16}$&$118$&$0.336$\\

$\Alt_{17}$&$151$&$0.324$&$\Alt_{18}$&$195$&$0.310$\\

$\Alt_{19}$&$248$&$0.300$&$\Alt_{20}$&$\ge 162^*$&$0.395$\\

$\Alt_{21}$&$\ge 204^*$&$0.379$&$\Alt_{22}$&$\ge 256^*$&$0.365$\\\hline
\multicolumn{6}{l}{\mbox{\tiny ${}^*$ We use the bound $k\ge k(\Alt_n)/2.$}} \\
\end{tabular}

\label{tab:alt-spor}
\end{table}

Let $\mathbf{G}$ be a simply connected simple algebraic group of rank $r>0$ and let $F$ be a Steinberg endomorphism of
$\mathbf{G}$ associated to a prime power $q.$ Then $L=\mathbf{G}^F$ is a quasi-simple group and $L/\Center(L)\cong T$
is a finite simple group of Lie type with $d=|\Center(L)|.$ From \cite[Theorem 3.1]{FG} and \cite[Lemma 2.1]{FG}, we
have that $k(L)\ge q^r$ and $k(L)\le k(\Center(L))k(L/\Center(L))$ and thus $k(T)\ge k(L)/k(\Center(L))\ge q^r/d$
hence by (\ref{eq1}), we have

\begin{equation}\label{lower bounds}
k=k^*(T)\ge \textrm{max}\{e(T),\frac{q^r}{d|\Out(T)|}\}.
\end{equation}
Denote by $\Irr(H)$ the set of complex irreducible characters of a finite group $H.$ Then it is well-known that
$k(H)=|\Irr(H)|$ and by Brauer's permutation lemma, the numbers of $\Aut(H)$-orbits on irreducible characters and on conjugacy classes of $H$ are the same.
Therefore, if we write $\cd(H)$ for the set of character degrees of $H,$ then $k^*(H)\ge |\cd(H)|.$ It follows that
\begin{equation}\label{degrees}k^*(T)\ge |\cd(T)|.\end{equation}

\begin{lem}\label{lem:Lie type} Theorem \ref{th:c2} holds for finite simple groups of Lie type.
\end{lem}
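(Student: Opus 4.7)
The plan is to proceed family by family through Table \ref{tab:Aut}, combining the lower bound
\[
k^*(T)\ \ge\ \max\!\left\{\,e(T),\ \frac{q^r}{d\,|\Out(T)|},\ |\cd(T)|\,\right\}
\]
from (\ref{lower bounds}) and (\ref{degrees}) with the upper bound on $|\Aut(T)|$ recorded in the table. The function to be controlled is
\[
\gamma(T)\ =\ \frac{\log|\Aut(T)|}{(\log k)^{2}\log\log k},
\]
and in every "generic" regime the numerator is essentially $(\dim T)\log q+O(\log f)$ while $\log k$ is at least $r\log q-\log d-\log|\Out(T)|$. The field-automorphism contributions $f$ inside $|\Aut(T)|$ are absorbed by Lemma \ref{lem:inequalities}(3),(4), and the remaining inequality reduces to comparing $(\dim T)\log q$ with $r^{2}(\log q)^{2}\log(r\log q)$. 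Hence $\gamma(T)<c=1.613$ as soon as $q^{r}$ is sufficiently large relative to $\dim T$.

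Using this template I would dispose of the high-rank classical families ($\PSL_n,\PSU_n,\PSp_{2n},\Omega_{2n+1},\mathrm{P}\Omega^{\pm}_{2n}$ with $n$ not very small) and the exceptional families $\mathrm{E}_6,\mathrm{E}_7,\mathrm{E}_8,\mathrm{F}_4,\mathrm{G}_2,{}^3\mathrm{D}_4,{}^2\mathrm{E}_6,{}^2\mathrm{B}_2(q^2),{}^2\mathrm{G}_2(q^2),{}^2\mathrm{F}_4(q^2)$ by a single direct estimate each, since for them the rank is bounded and $r^{2}$ in the denominator of $\gamma$ easily overcomes the fixed $\dim T$ in the numerator; the cross-term inequalities in Lemma \ref{lem:inequalities}(1),(2) handle the factors $(q^{a}-1)(q^{b}\pm 1)$ coming from the order of $T$. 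In each of these families the bound $k\ge 5$ is automatic from $q^{r}/(d|\Out(T)|)\ge 5$.

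The main obstacle will be the small-rank classical families $\PSL_2(q),\PSL_3(q),\PSL_4(q),\PSU_3(q),\PSU_4(q),\PSp_4(q)$, where $q^{r}$ grows too slowly against $|\Aut(T)|$ for the crude estimate to suffice when $q$ is small. For each of these one first establishes $\gamma(T)<c$ for all $q$ above an explicit threshold by the generic argument, and then, for the finitely many residual $q$, switches to sharper lower bounds for $k^{*}(T)$: either $e(T)$ read off from the list of element orders, or $|\cd(T)|$ read off from the generic degrees, or—when both fail—a direct \cite{GAP} computation as in Table \ref{tab:alt-spor}. This analysis is what both isolates the single exception $T=\PSL_3(4)$, whose large outer automorphism group of order $12$ forces a small $k^{*}(T)$, and provides the specific bound $\gamma(\PSL_3(4))\le 1.954$ via a direct evaluation of $k^{*}(\PSL_3(4))$ and $|\Aut(\PSL_3(4))|$.

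The claim $k\ge 5$ for all Lie-type $T$ is verified as a by-product: any candidate $T$ with $k\le 4$ would have to lie in the small-rank residual list, and the sharper lower bounds above exclude each such case. The hard technical core is therefore the small-$q$ analysis of $\PSL_2,\PSL_3,\PSU_3,\PSp_4$, where the interplay between the size of $|\Out(T)|$ (which inflates $|\Aut(T)|$ and simultaneously deflates the lower bound $q^{r}/(d|\Out(T)|)$ for $k$) is delicate; everywhere else the estimate is comfortable.
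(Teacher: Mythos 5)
Your outline tracks the paper's strategy closely: the paper also works family by family through Table~\ref{tab:Aut}, uses the same lower bounds $k^{*}(T)\ge\max\{e(T),\,q^{r}/(d|\Out(T)|),\,|\cd(T)|\}$ from \eqref{lower bounds} and \eqref{degrees}, settles the generic regime by comparing $(\dim T)\log q$ against $(\log k)^{2}\log\log k$, isolates the small-rank, small-$q$ residue, and extracts $\PSL_3(4)$ as the lone Lie-type exception with $\gamma\le 1.954$. You also correctly predict that $k\ge 5$ falls out of the same residual analysis.

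That said, your proposal is a plan rather than a proof: every clause is in the conditional ("I would dispose of", "one first establishes", "switches to sharper lower bounds"), and none of the numerical thresholds, table lookups, or inequality manipulations that actually carry the argument are performed. In the paper those are the substance: the detailed work is confined to $T=\PSL_n(q)$, proceeding through nine explicit cases depending on $n$ and $q$, using Dornhoff's exact class numbers for $\PSL_2$, L\"ubeck's bound for $\PSL_3$, Fulman--Guralnick for $n\ge 4$, Table~\ref{tab2} for the residual small $q$, and the concrete inequalities \eqref{eqn1}--\eqref{eqn2}; all the other families are dismissed with "similar argument." If you were to flesh out your outline, the $\PSL_n$ cases would look very much like the paper's, and you are right that $\PSU_3,\PSU_4,\PSp_4$ with small $q$ also demand care even though the paper does not spell them out. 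So: same approach, correct skeleton, but the verification work that constitutes the lemma's proof remains entirely to be done.
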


\begin{proof}
For the proof of this lemma, we only give a detailed proof for $T=\PSL_n(q)$ with $n\ge 2$ and $q=p^f$ for some prime $p$ and integer $f\ge 1,$
which is the most difficult case. Other families can be dealt with a similar argument.

(i) Assume $T=\PSL_2(q)$ with $q=2^f$. By Lemma \ref{lem:spor-alt}, we can assume that $T$ is not an alternating group. So, $f\ge 3.$ In this case,
we have that $|A|=q(q^2-1)f\le f\cdot 2^{3f}.$ Now, if $3\le f\le 6,$ then $k$ is given in Table \ref{tab2}. For these cases, it is easy to check
that $k\ge 5$ and $$\gamma=\frac{\log |A|}{(\log k)^2\log\log k}\le \frac{3f+\log f}{(\log k)^2}<c.$$ Notice that $1.612006<\gamma(\PSL_2(8))\le 1.613=c.$
We now assume that $f\ge 7.$ We use the lower bound given in \eqref{eq1} where $|\Out(T)|=f$ and $k(T)=q+1$ (see \cite[Theorem 38.2]{Dornhoff}).
So $$k\ge k(T)/|\Out(T)|=(q+1)/f>2^f/f>18.$$ Thus $\gamma\le {(3f+\log f)}/{(f-\log f)^2}.$ Direct computation using the previous inequality shows
that $\gamma<c$ when $f\le 16.$ So, we assume that $f\ge 17.$ Then $f\ge f/2\ge \log f$ and thus $\gamma\le {4f}/{(f-f/2)^2}={16}/{f}<1.$

(ii) $T=\PSL_2(q)$ with $q=7$ or $q=p^f\ge 11$ odd. From \cite[Theorem 38.1]{Dornhoff} we derive that $k(T)=(q+5)/2.$ Moreover, we have
$|A|=q(q^2-1)f$ and $|\Out(T)|=2f.$

(ii)(a) Assume first that $p=3.$ Then $f\ge 3.$ If $f=3,4$ or $5,$ then $k=7,15$ or $27.$ Direct calculation shows that $\gamma<c.$ Assume next that
$f\ge 6.$ We have $k\ge (q+5)/4f\ge 12$ and $\log |A|<\log (fq^3)=3f\log 3+\log f\le 6f$ so $\log k\ge \log(q/4f)=f\log 3-\log (4f)\ge f-2.$
If $f\ge 10,$ then  $\gamma<{6f}/{(f-2)^2}<c.$ So, assume that $6\le f\le 9.$ Then direct calculation using the bound $k\ge (3^f+5)/4f$ confirms
that $\gamma<c.$

(ii)(b)   Assume $p\ge 5$ and $f=1.$ Since $\PSL_2(5)\cong\Alt_5,$ we assume that $p\ge 7.$ Then $\gamma\le {3\log p}/{(\log (p+5)-2)^2}<{3\log p}/{(\log p-2)^2}.$ Clearly, $\gamma<c$ whenever $\log p\ge 6.$ So, assume that $\log p<6$ or equivalently $p<2^6=64$ and hence $p\le 61.$ Now we can check that $\gamma<c$ by using Table \ref{tab2}. If $7\le p\le 71,$ then $k\ge 5$ by Table \ref{tab2}. So, assume $p\ge 71.$ Then $k\ge (p+5)/4\ge 19>5.$

(ii)(c)   Assume $p\ge 5$ and $f=2.$ If $p\le 13,$ then the result follows by using Table \ref{tab2}. So, we assume $p\ge 17.$ Then $k\ge (p^2+5)/8\ge 614$ and $\gamma<(6\log p+1)/(2\log p-3)^2<c$ since $\log p\ge 4.$

(ii)(d) Assume $p\ge 5$ and $3\le f\le 4.$ Then $k=(q+5)/4f>10$ and we can use the same argument as in the previous case to show that $\gamma<c.$

(ii)(e) Assume $p\ge 5$ and $f\ge 5.$ We have $k\ge (q+5)/4f>232$ and  $t=f\log p\ge 11.$ So $\log f\le f\log p/4=t/4$ and \[\gamma<\frac{3f\log p+\log f}{(f\log p-2-\log f)^2}\le \frac{3t+t/4}{(3t/4-2)^2}=\frac{52t}{(3t-8)^2}.\] Since $t\ge 11,$ we see that ${52t}/{(3t-8)^2}<c$ and thus $\gamma<c$ as wanted.

(iii) $T=\PSL_3(q)$ with $q=p^f\ge 3.$ Let $d=\gcd(3,q-1).$ Then $|A|<2fq^8\le q^9,|\Out(T)|=2df$ and $k(T)\ge (q^2+q)/d$ (see \cite{Lubeck}) so $k\ge (q^2+q)/2d^2f.$

(iii)(a) Assume first that $d=\gcd(3,q-1)=3.$ We have $q\ge 2f$ so $k\ge q/9$ and thus $\gamma<{9\log q}/{(\log q-\log 9)^2}\le {9\log q}/{(\log q-3)^2}.$ If $\log q\ge 12,$ then $9\log q/(\log q-3)^2<c$ and $k\ge 819.$ So, assume $\log q<12$ or $q<2^{12}.$

Now if $q=4,$ then $\gamma<1.954;$  if $q=7,$ then $\gamma<c$ by direct calculation using Table \ref{tab2}. If $q=16,$ then $k\ge e(T)=12$ and we get that $\gamma<c.$
Assume that $q\not\in\{4,7,16\}.$ Then $\gamma<c$ by direct calculation using the definition of $\gamma$ with $k\ge (q^2+q)/18f$ and $|A|\le 2fq^8.$ By Table \ref{tab2}, we see that $k\ge 5$ if $q\le 9.$ Assume $q\ge 11.$ If $q/9>4$ or $q>36$ then $k\ge 5.$ So, we may assume $11\le q\le 35.$ Except for $q=16,$ we see that $k\ge (q^2+q)/18f\ge 5.$ For $q=16,$ we can see by \cite{GAP} that $k\ge e(\PSL_3(16))=12.$

(iii)(b) Assume $d=1.$ Here, the argument is similar with $k\ge (q^2+q)/2f\ge q+1>q$ and so $\gamma<(8\log q+\log (2f))/(\log q)^2\le {9}/{\log q}.$
Clearly if $q\ge 53,$ then $9/\log q<c$ and thus $\gamma<c.$ For the remaining values of $q>2,$ direct calculation confirms that $\gamma<c.$ Now, if $q\ge 4,$ then $k\ge q+1\ge 5.$ For the remaining values of $q,$ we see that $k\ge 5.$

(iv) Assume $n\ge 3$ and $q=2.$  Then we may assume that $n\ge 5$ as $\PSL_4(2)\cong\Alt_8$ and $\PSL_3(2) \cong \PSL_2(7)$.
If $n=5,$ then $k=20$ and $\gamma<c.$ So, assume $n\ge 6.$
We have that $d=(n,q-1)=1$ and $f=1$ so $|\Out(T)|=2.$ Hence $k\ge 2^{n-2}\ge 16$ and thus $\gamma<{n^2}/{((n-2)^2\log (n-2))}.$ Since $\log (n-2)\ge \log 4=2,$ we see that \[\frac{n^2}{(n-2)^2\log (n-2)}<\frac{1}{2}(1+\frac{4}{n-2}+\frac{4}{(n-2)^2})\le  \frac{1}{2}(1+\frac{4}{4}+\frac{4}{16})=\frac{9}{8}<c.\]

So, we can assume from now on that $n\ge 4$ and $q\ge 3.$ Then, we have $k(T)\ge q^{n-1}/d$ (see \cite[Corollary 3.7]{FG}) and thus $k\ge {q^{n-1}}/{(2d^2f)}\ge {q^{n-2}}/{d^2}\ge {q^{n-3}}/{d}\ge q^{n-4}.$  Therefore
\begin{equation}\label{eqn1}\gamma<\frac{(n^2-1)\log q+\log (2f)}{((n-1)\log q-\log (2d^2f))^2\log ((n-1)\log q-\log (2d^2f))}\end{equation}

or \begin{equation}\label{eqn2}\gamma<\frac{(n^2-1)\log q+\log (2f)}{((n-2)\log q-2\log d)^2\log ((n-2)\log q-2\log d)}.\end{equation}

(v) Assume $4\le n\le 7$ and $q\ge 3.$ We can use the same argument as in Case (iii) above to obtain the result.
As an example, assume that $n=4.$ We deduce from Inequality \eqref{eqn2} that
$$\gamma <\frac{15\log q+\log (2f)}{(2\log(q)-2\log d)^2}\le \frac{4\log q}{(\log q-\log d)^2}\le \frac{4\log q}{(\log q-2)^2}.$$ We see that $4\log q/(\log q-2)^2<c$ whenever $\log q\ge 6$ and thus $\gamma<c.$ For all $q\ge 3$ with $\log q<6$ or equivalently $q<2^8=256,$ direct calculation using Equation \eqref{eqn1} shows that $\gamma<c.$ Since $k\ge q^2/d^2\ge q^2/16,$ we see that $k\ge 5$ if $q>8.$ For $3\le q\le 8,$ we can check directly that $k\ge 5.$

(vi) Assume $n\ge 8$ and $q\ge 3.$ Then $k\ge q^{n-4}\ge 81,$ \[\frac{n^2}{(n-4)^2}=(1+\frac{4}{n-4})^2=1+\frac{8}{n-4}+\frac{16}{(n-4)^2}\le 4\] and $\log((n-4)\log q)\ge \log 4=2.$ From Inequality \eqref{eqn1}, we have that  \[\gamma< \frac{n^2}{(n-4)^2\log q\log((n-4)\log q)}\le \frac{4}{2\log q}<c.\]
This completes the proof.
\end{proof}

The proof of Theorem \ref{th:c2} now follows by combining Lemmas \ref{lem:spor-alt} and \ref{lem:Lie type}.

\begin{table}\caption{$\PSL_2(q)$ and $\PSL_3(q)$ with small $q$}
\begin{tabular}{lll|lll} \hline
$T$&$k $&$\gamma<$& $T$&$k$&$\gamma<$\\\hline

$\PSL_2(8)$&$5$&$1.613$& $\PSL_2(16)$&$7$&$1.193$\\
$\PSL_2(32)$&$9$&$1.036$& $\PSL_2(64)$&$15$&$0.686$\\
$\PSL_2(7)$&$5$&$1.281$& $\PSL_2(11)$&$7$&$0.884$\\
$\PSL_2(13)$&$8$&$0.778$& $\PSL_2(17)$&$10$&$0.642$\\
$\PSL_2(19)$&$11$&$0.595$& $\PSL_2(23)$&$13$&$0.525$\\
$\PSL_2(25)$&$10$&$0.782$& $\PSL_2(27)$&$7$&$1.351$\\
$\PSL_2(29)$&$16$&$0.456$& $\PSL_2(31)$&$17$&$0.438$\\
$\PSL_2(37)$&$20$&$0.397$& $\PSL_2(41)$&$22$&$0.375$\\
$\PSL_2(43)$&$23$&$0.366$& $\PSL_2(47)$&$25$&$0.349$\\
$\PSL_2(49)$&$17$&$0.526$& $\PSL_2(53)$&$28$&$0.329$\\
$\PSL_2(59)$&$31$&$0.312$& $\PSL_2(61)$&$32$&$0.307$\\
$\PSL_2(67)$&$35$&$0.294$& $\PSL_2(71)$&$37$&$0.286$\\
$\PSL_2(121)$&$37$&$0.337$& $\PSL_2(169)$&$50$&$0.292$\\

$\PSL_3(4)$&$6$&$1.954$& $\PSL_3(7)$&$15$&$0.781$\\

$\PSL_3(3)$&$9$&$0.805$& $\PSL_3(5)$&$19$&$0.518$\\

$\PSL_3(8)$&$17$&$0.783$& $\PSL_3(9)$&$32$&$0.471$\\
\hline
\end{tabular}

\label{tab2}
\end{table}

\section{Almost simple groups}\label{sec:almost simple}

In this section, we prove the following.

\begin{thm}\label{th:almost simple}
Let $G$ be an almost simple group with non-abelian simple socle $T.$ Suppose that $k=k^*(T)\le 153.$ Then \begin{equation}\label{base3}\log|G|\le (\log 3)k(G).\end{equation}
\end{thm}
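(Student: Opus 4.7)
The proof is a finite verification. The hypothesis $k^*(T)\le 153$, via Lemma \ref{l2} together with the explicit value $c_2=1.954$ from Theorem \ref{th:c2}, bounds $|\Aut(T)|$ absolutely, so only finitely many pairs $(T,G)$ need to be considered; moreover Tables \ref{tab:alt-spor} and \ref{tab2} pin the list down explicitly.

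First I would enumerate the admissible socles $T$. From Table \ref{tab:alt-spor}, every sporadic simple group and the Tits group qualifies except the Baby Monster ($k^*=184$) and the Monster ($k^*=194$). For alternating groups, the bound $k^*(\Alt_n)\ge p(n)/4$ used in Lemma \ref{lem:spor-alt} forces $n\le 17$, since $k^*(\Alt_{18})=195$ and the tabulated lower bounds for $\Alt_{20},\Alt_{21},\Alt_{22}$ already exceed $153$. For the Lie-type families, the estimates in the proof of Lemma \ref{lem:Lie type} provide explicit lower bounds on $k^*(T)$ growing in $q$ and the rank (for instance $k^*(\PSL_n(q))\ge q^{n-1}/(2d^2f)$ from \eqref{lower bounds}), so in each family the condition $k^*(T)\le 153$ cuts off at small parameters and leaves only a short finite list: Table \ref{tab2} handles $\PSL_2(q)$ and $\PSL_3(q)$, and the same pattern of case analysis disposes of the other classical and exceptional types.

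For each admissible socle $T$, I would enumerate the almost simple overgroups $G$ with $T\le G\le\Aut(T)$ (parameterised by subgroups of the small, typically solvable group $\Out(T)$ tabulated in Table \ref{tab:Aut}), extract $|G|$ and $k(G)$ from the ATLAS and the GAP character table library, and verify the numerical inequality $\log|G|\le (\log 3)k(G)$ directly. A useful preliminary reduction is the a priori bound $k(G)\ge k^*(T)$, since $G\le\Aut(T)$ makes the $G$-orbits on elements of $T$ no coarser than the $\Aut(T)$-orbits, and this already dispatches most cases without needing the precise value of $k(G)$.

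The main obstacle will be the borderline cases. The Bertram examples $G=\PSL_3(4)$ and $G=M_{22}$ achieve equality in the target inequality, which both forces the non-strict ``$\le$'' in the statement and rules out any argument which has slack only at the scale of one class. Nearby tight cases — notably the alternating groups $\Alt_{15},\Alt_{16},\Alt_{17}$, the larger sporadics $\mathrm{Co}_1$, $\mathrm{Fi}_{23}$, $\mathrm{Fi}_{24}'$, and $\PSL_2(q)$ at the top of its permitted range — need to be checked individually from the character table data. The computational burden is modest, but careful bookkeeping is required to ensure that no almost simple extension of any socle in the list is overlooked.
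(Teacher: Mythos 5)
Your proposal is correct and follows essentially the same strategy as the paper: reduce to a finite list of socles using the lower bound $k^*(T)\ge q^r/(d|\Out(T)|)$ (Inequality \eqref{lower bounds}) together with the partition-number bound for alternating groups, then for each admissible $T$ either discharge all almost simple $G\le\Aut(T)$ at once via $\log|\Aut(T)|\le(\log 3)k^*(T)$ (using $k(G)\ge k^*(T)$ from \cite[Lemma 2.5]{Py}), or fall back on explicit computation with GAP/Magma. The paper additionally records a small bookkeeping device — splitting $\Aut(T)=\Gamma\langle\tau\rangle$ and checking $s|G|\le 3^{k(G)/s}$ for $G\le\Gamma$ to cover the full tower — but this is a computational convenience, not a different argument.
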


\begin{proof}
We now describe our strategy for the proof of this theorem.
We consider the following setup. Let $T$ be a non-abelian simple group and, $A:=\Aut(T)$ and $k=k^*(T).$ Let $G$ be an almost simple group with socle $T,$ i.e., $T\unlhd G\le A.$

Firstly, if $T$ is a sporadic simple group, the Tits group or an alternating group of degree at most $22,$ then the result follows by direct computation with \cite{GAP} or \cite{Magma}. For $T=\Alt_n$ with $n\ge 23,$ it follows from the proof of Lemma \ref{lem:spor-alt} that $k^*(T)\ge 250>153.$ So, we may assume that $T$ is a finite simple group of Lie type.

Now suppose that $T$ is of Lie rank $r$ and defined over a field of size $q.$ Let $d$ be defined as in Section \ref{sec:c2}. Then we know that $k(T)\ge q^r/d$ and thus $k=k^*(T)\ge q^r/(d|\Out(T)|).$ We now use the restriction $k\le 153$ to obtain a finite list $\mathcal{L}$ of all simple groups $T$ with $k\le 153.$ Since $k(G)\ge k^*(T)$ by \cite[Lemma~2.5]{Py} and $\log |G|\le \log |A|,$ if we can show that
\begin{equation}\label{auto} \log |A|\le (\log 3) k
\end{equation}
then obviously Inequality \eqref{base3} holds.
For the remaining groups, we can check Inequality \eqref{base3} directly using the known bound for $k(T)$ or using \cite{Magma,GAP,Lubeck}.

For the purpose of computation, the following observation will be useful.
Suppose that $A:=\Aut(T)=\Gamma \la \tau\ra,$ where $T\unlhd \Gamma\le A$ with $|A:\Gamma|=s$ for some integer $s\ge 1.$
Now, if we can prove that for every almost simple group $G$ with $T\unlhd G\leq \Gamma,$ we have $s\cdot |G|\le 3^{k(G)/s},$
then $|H|\le 3^{k(H)}$ for all almost simple groups with socle $T.$ This follows from the fact that if $T\unlhd H\le A,$
then $G:=T\cap \Gamma$ has index at most $s$ in $H,$ so $|H|\le s|G|$ and $k(H)\ge k(G)/s.$ Therefore, if $s\cdot |G|\le 3^{k(G)/s},$
then obviously $3^{k(H)}\ge 3^{k(G)/s}\ge s|G|\ge |H|$ as wanted. This will be useful when we can compute $k(G)$ for all $T\unlhd G\le \Gamma.$
This observation applies when, for example, $T=\PSL_n(q),(n\ge 3),$ $\Gamma=\textrm{P}\Gamma \textrm{L}_n(q)$ and $A=\Gamma\la\tau\ra,$
where $\tau$ is a graph automorphism of $T$ of order $2.$

To demonstrate our strategy, we give a detailed proof for $T=\PSL_n(q)$ with $n\ge 2$ and $q=p^f.$

(i) Assume that $n=2.$ Suppose first that $q=2^f.$ From \cite[Theorem 38.2]{Dornhoff}, we have $k(T)=2^f+1.$  Using \cite{GAP},
we can check that the result holds for $2\le f\le 7.$ Assume $f > 7.$ Since $153\ge k\ge (2^f+1)/f,$ we deduce that $7< f\le 11.$ We have that
$\log |G|\le \log |\Aut(T)|\le 3f+\log f.$ If $G=T,$ then the result is obvious, so we may assume $G\neq T.$ We now can use \cite{Magma}
to show that Inequality \eqref{base3} holds for all almost simple groups $G$ with socle $T=\PSL_2(2^f)$, with $7< f\le 11.$

Assume next that $q=p^f\ge 7$ is odd. Then $k(T)=(q+5)/2$ and $k(\PGL_2(q))=q+2$, see \cite[Theorem 38.1]{Dornhoff}.  Clearly, we can check that the result holds in these cases. So, we may assume from now on that $G\not\cong \PSL_2(q)$ nor $\PGL_2(q).$ Moreover, if $f\ge 5$ and $p\ge 5,$ then $k\ge (p^{2f}+5)/(4f)\ge (5^{2f}+5)/(4f)\ge  154.$ So, we only need to consider the following cases.

If $f=1,$ then $q=p^f=p\ge 5.$ Since $153\ge k\ge (p+5)/4,$ we have $p\le 607.$ So, $G=\PSL_2(p)$ or $\PGL_2(p)$ with $p\le 607$ and the result follows
using \cite{Magma}.

If $f=2,$ then, arguing as above, we obtain that $p\le 31.$ Similarly, if $f=3,$ then $p\le 11$ and finally, if $f=4,$ then $p\le 7.$ Now, we can use \cite{Magma} to verify that Inequality \eqref{base3} holds in these cases.

(ii) Assume that $q=2$ and $n\ge 3.$ Then $k\ge 2^{n-1}/(2d^2f)=2^{n-2}$ as $f=d=1.$ Since  $k\le 153,$ we have $n\le 9.$ Now, if $n\ge 7,$ then $(\log 3)k\ge (\log 3)2^{n-2}\ge n^2\ge \log |A|,$ hence Inequality \eqref{auto} holds and so Inequality \eqref{base3} holds in this case.  For $3\le n\le 6,$ we can check directly that Inequality \eqref{base3} holds using \cite{GAP}.

(iii) Assume that $q=3$ and $n\ge 3.$ Then $d=\gcd(n,q-1)=\gcd(n,2)\le 2$ and $f=1,$ so $k\ge 3^{n-1}/(2d^2f)=3^{n-1}/8.$ Since $k\le 153,$ we have $n\le 7.$

If $n=7,$ then $d=\gcd(7,2)=1$ so $(\log 3) k\ge (\log 3)3^{n-1}/2>n^2\log 3>\log |A|$ and thus Inequality \eqref{auto} holds.

If $n=6,$ then $d=\gcd(6,2)=2$ and $k(T)=204$ by \cite{Magma}. So $k\ge k(T)/|\Out(T)|\ge 51.$ Now we can check that $\log |A|<36 \log 3<51\log 3<(\log 3)k.$

If $n=5,$ then $d=\gcd(5,2)=1$ and $k(T)=116,$ so $k\ge 116/2=58.$ Hence Inequality \eqref{auto} holds.

Finally, if $n=3,4,$ then the results follow by using \cite{Magma}.

(iv) Assume now that $n=3$ and $q\ge 4.$ We have that $k(T)\ge (q^2+q)/d$ and thus $k\ge (q^2+q)/(2d^2f)\ge (q+1)/d^2\ge (q+1)/9.$ Since $k\le 153,$ we have $q\le 1376.$ For these values of $q,$ we can check directly that $\log|A|\le 9\log q\le (\log 3)(q^2+q)/(2d^2f)\le k(\log 3)$ unless $q\in\{4,7,8,13,16,19,25\}.$

The case when $q=4,7,8$ can be check directly using \cite{GAP}. For $q=16,25,$ we can check that $2|G|\le 3^{k(G)/2}$ for all $T\unlhd G\le \Gamma$ and thus the results follow by the observation above.

(v) Assume that $n=4$ and $q\ge 4.$ Since $d=\gcd(n,q-1)\le n=4$ and $153\ge k\ge q^3/(2d^2f)\ge q^2/16,$ we deduce that $4\le q\le 49.$ However, we can check that $\log |A|<36\log q<(\log 3)q^3/(2d^2f)\le (\log 3)k\le (\log 3) k$ unless $q=4,5,9.$ Now we use the observation and \cite{Magma} to show that Inequality \eqref{base3} holds for the remaining cases.

(vi) Assume that $n=5$ and $q\ge 4.$ Since $d=\gcd(n,q-1)\le 5$ and $153\ge k\ge q^4/(2d^2f)\ge q^3/25,$ we deduce that $q\le 15,$ so $q=4,5,7,8,9,11,13.$
However, we can check with \cite{Magma} that $\log |A|<25\log q<(\log 3)q^4/(2d^2f)\le (\log 3)k,$ so Inequality \eqref{auto} holds.

(vii) Assume that $n=6$ and $q\ge 4.$ Since $d=\gcd(n,q-1)\le 6=n$ and $153\ge k\ge q^5/(2d^2f)\ge q^4/36,$ we deduce that $q\le 8,$ so $q=4,5,7,8.$ However, we can check that $\log |A|<36\log q<(\log 3)q^5/(2d^2f)\le (\log 3)k$ unless $q=4.$ Now we use the observation together with \cite{Magma} to verify \eqref{base3} for this case.

(viii) Assume that $n=7$ and $q\ge 4.$ Since $d=\gcd(n,q-1)\le n=7$ and $153\ge k\ge q^6/(2d^2f)\ge q^5/49,$ we deduce that $q\le 5,$ so $q=4,5.$ However, we can check that $\log |A|<49\log q<(\log 3)q^6/(2d^2f)\le (\log 3)k.$

(ix) Assume that $n\ge 8$ and $q\ge 4.$ We see that $k\ge q^{n-1}/(2d^2f)\ge q^{n-2}/d^2\ge q^{n-4}\ge 4^4=256>153.$ So this case cannot occur.
 \end{proof}

\section{More on groups with a trivial solvable radical}\label{sec:trivial radical}

This section is devoted to proving Theorem \ref{th:base 3}. We use the notations and assumptions of Section \ref{sec:asymptotics}. We start with a lemma.

\begin{lem}
\label{l8}
With the notation and assumption in Section \ref{sec:asymptotics}, we have $$\prod_{i=1}^{r} \binom{n_{i}+k_{i}-1}{k_{i}-1} \leq k(G).$$
\end{lem}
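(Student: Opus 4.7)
The plan is to bound $k(G)$ from below by counting $G$-conjugacy classes contained in the socle $N$. Since $N \trianglelefteq G$, the set of such classes is exactly the set of orbits of $G$ acting on $N$ by conjugation, so it suffices to bound the number of these orbits from below by $\prod_{i=1}^r \binom{n_i+k_i-1}{k_i-1}$.

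First I would observe that each $M_i$ is $G$-invariant, so conjugation yields a homomorphism $G \to \prod_{i=1}^{r} \mathrm{Aut}(M_i)$; in particular, the $G$-action on $N$ factors through a subgroup of the product $\prod_i \mathrm{Aut}(M_i)$. Since this is a subgroup, every orbit of the full product on $N$ is a union of $G$-orbits, so the number of $G$-orbits on $N$ is at least the number of $\prod_i \mathrm{Aut}(M_i)$-orbits on $N$. Because $N = M_1 \times \cdots \times M_r$ and the factor $\mathrm{Aut}(M_i)$ acts only on $M_i$, this count factorises as $\prod_{i=1}^{r} (\text{number of } \mathrm{Aut}(M_i)\text{-orbits on } M_i)$.

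Next, for a fixed $i$, I would identify $\mathrm{Aut}(M_i)$ with $\mathrm{Aut}(T_{i,1}) \wr \SSS_{n_i}$ (since $M_i$ is a direct product of $n_i$ copies of the non-abelian simple group $T_{i,1}$, so its automorphism group is the wreath product). Under this identification, two tuples $(t_1,\ldots,t_{n_i})$ and $(s_1,\ldots,s_{n_i})$ in $T_{i,1}^{n_i}$ lie in the same orbit precisely when the multisets of $\mathrm{Aut}(T_{i,1})$-orbits represented by their entries coincide. Thus the number of $\mathrm{Aut}(M_i)$-orbits on $M_i$ equals the number of multisets of size $n_i$ drawn from a set of $k_i = k^*(T_{i,1})$ elements, namely $\binom{n_i+k_i-1}{k_i-1}$ by a standard stars-and-bars count.

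Combining the two steps yields
\[
k(G) \;\geq\; \#\{G\text{-classes in }N\} \;\geq\; \prod_{i=1}^{r} \binom{n_i+k_i-1}{k_i-1},
\]
as required. The only substantive point is the identification of $\mathrm{Aut}(M_i)$-orbits on $T_{i,1}^{n_i}$ with multisets of $\mathrm{Aut}(T_{i,1})$-orbits; everything else is routine once one notices that the $M_i$ are individually $G$-invariant so the conjugation action factors through the direct product $\prod_i \mathrm{Aut}(M_i)$.
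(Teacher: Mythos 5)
Your proof is correct and follows essentially the same approach as the paper: reduce to counting $G$-orbits inside the socle $N$, enlarge $G$ to the largest possible acting group, and for each $M_i$ compute the orbit count of $\mathrm{Aut}(T_{i,1}) \wr \SSS_{n_i}$ on $T_{i,1}^{n_i}$ as the multiset count $\binom{n_i+k_i-1}{k_i-1}$. The only cosmetic difference is that the paper invokes Foulser's Lemma 2.6 for this final wreath-product orbit count, whereas you spell out the elementary stars-and-bars argument directly.
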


\begin{proof}
It is sufficient to show that the number of orbits of $G$ on the set of conjugacy classes of $N$ is at least $\prod_{i=1}^{r} \binom{n_{i}+k_{i}-1}{k_{i}-1}$. For this it is sufficient to show that the number of orbits of $G$ on the set of conjugacy classes of $M_{i}$ (for any fixed $i$ with $1 \leq i \leq r$) is at least $\binom{n_{i}+k_{i}-1}{k_{i}-1}$. But this follows from \cite[Lemma 2.6]{F} since we may assume that $G$ is as large as possible, that is, it induces an action of $S_{n_i}$ on the factors of $M_{i}$.
\end{proof}

We continue with another lemma.

\begin{lem}\label{lem1}
Let  $4\le k\in\N.$ Then $(\log k)^2\log\log k\le {k^2}/{2}.$
\end{lem}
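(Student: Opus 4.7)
My plan is to prove the inequality by a two-case split around $k=16$, using two easy auxiliary bounds on $\log k$: the loose but universally valid $\log_{2} k \le k/2$ for $k\ge 2$, and the sharper $\log_{2} k \le \sqrt{k}$ for $k\ge 16$. In each range I will use whichever bound makes the elementary comparison $(\log k)^{2}\log\log k \le k^{2}/2$ transparent.

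For $4\le k\le 16$, the strategy is to note that $\log_{2} k \le k/2$ gives $(\log k)^{2}\le k^{2}/4$, while the restriction $k\le 16$ forces $\log k\le 4$ and therefore $\log\log k\le 2$. Multiplying these two bounds produces exactly $k^{2}/2$, which handles the low range (in particular, the boundary case $k=4$ is already tight up to a factor of two). Because $k$ is an integer this could alternatively be dispatched by direct verification of the thirteen values $k=4,5,\dots,16$, but the uniform argument above is cleaner.

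For $k\ge 16$ I will use $\log_{2} k\le \sqrt{k}$, which I will justify by checking equality at $k=16$ and observing that $\frac{d}{dk}(\sqrt{k}-\log_{2} k)=\frac{1}{2\sqrt{k}}-\frac{1}{k\ln 2}$ is positive once $\sqrt{k}>2/\ln 2$, hence certainly for $k\ge 16$. This upgrades $(\log k)^{2}\le k^{2}/4$ to the sharper $(\log k)^{2}\le k$. Combined with $\log\log k\le \log k\le k/2$ (the first inequality because $\log k\ge 1$ for $k\ge 2$, the second because $\log_{2} k\le k/2$), I get $(\log k)^{2}\log\log k\le k\cdot(k/2)=k^{2}/2$ as required.

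There is no real obstacle here: the statement is a pure growth comparison, and the only minor technical point is calibrating where to switch between the two bounds on $\log k$. The bound $\log\log k\le 2$ is wasted past $k=16$, which is precisely why the sharper $\log k\le \sqrt{k}$ has to take over at that threshold; conversely, $\log k\le \sqrt{k}$ fails in the window $4<k<16$ (for instance at $k=8$, where $\log_{2}8=3>\sqrt{8}$), so the split at $k=16$ is forced by the geometry of the two comparison functions.
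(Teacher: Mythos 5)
Your proof is correct in substance and takes a genuinely different route from the paper's. The paper sets $x=\log k$, uses only $\log\log k\le\log k$ to reduce the claim to the single comparison $4^{x}\ge 2x^{3}$, observes this holds analytically once $x\ge 5$, and disposes of the remaining integers $4\le k<32$ by direct verification. You instead split at $k=16$ and, in each range, pair a bound on $\log k$ (namely $\log k\le k/2$ on the low range and $\log k\le\sqrt{k}$ on the high range) with a matching bound on $\log\log k$ (namely $\log\log k\le 2$ and $\log\log k\le k/2$ respectively), so that the product lands exactly on $k^{2}/2$. The gain in your version is that no finite list of integers needs to be checked by hand: both legs are uniform, closed-form estimates backed by a one-line monotonicity argument, which some readers will find cleaner. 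The paper's version is shorter on the page precisely because it outsources the small cases to a quick numerical check.

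One small slip worth fixing: you assert $\log_{2}k\le k/2$ for all $k\ge 2$, but this is false at $k=3$ (indeed $\log_{2}3\approx 1.585>1.5$; the function $k/2-\log_{2}k$ vanishes at $k=2$ and $k=4$ and is negative strictly between). Since you only invoke the bound for $k\ge 4$, where it does hold (equality at $k=4$, derivative $\tfrac{1}{2}-\tfrac{1}{k\ln 2}>0$ for $k>2/\ln 2$), the argument itself is unaffected, but the stated range of validity should be corrected to $k\ge 4$ (or, for real $k$, to $k\ge 4$ together with $0<k\le 2$).
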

\begin{proof}
Let $x=\log k\ge 2.$ Then  $\log\log k\le \log k$ and hence, it suffices to prove that $4^{x}\ge 2x^3$ which is always true when $x\ge 5.$ For $2\le x<5$ or $4\le k<32,$ we can check directly that the inequality in the lemma holds true.
\end{proof}

Consider the inequality
\begin{equation}\label{eqn}n_{i} \log n_{i} + c_{2} n_{i} {(\log k_{i})}^{2} (\log \log k_{i}) \leq w_{i} \cdot (\log 3) \binom{n_{i}+k_{i}-1}{k_{i}-1}\end{equation}
for a fixed positive number $w_{i}$.

\begin{lem}\label{lem:n=1} In Inequality \eqref{eqn}, let $n=n_i\ge 1,$  $k=k_i\ge 4$, $c_2=1.954$, and let $w = w_{i}$. Then
\begin{enumerate}
\item[$(i)$] If $n=1$ and $k\ge 222,$ then Inequality \eqref{eqn} holds with $w=1$.
\item[$(ii)$] If $n=2$ and $k\ge 9,$ then Inequality \eqref{eqn} holds with $w=1$.
\item[$(iii)$] Inequality \eqref{eqn} always holds with $w=1$ if $n\ge 3$.

\item[$(iv)$] If $n=2$ and $4 \leq k < 9$, then Inequality \eqref{eqn} holds with $w=1.17$.
\item[$(v)$] If $n=1$ and $k\ge 4,$ then Inequality \eqref{eqn} holds with $w=2.5$.
\end{enumerate}
\end{lem}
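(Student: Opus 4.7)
The strategy for all five parts is uniform: substitute $n = n_i$, $k = k_i$, $w = w_i$ into \eqref{eqn}, recognize that the right-hand side grows much faster than the left-hand side in both variables, and verify the resulting single-variable inequality by a direct check at the boundary plus a monotonicity argument beyond. The key analytic input is Lemma \ref{lem1}, together with the fact that $\binom{n+k-1}{k-1}$ is a polynomial of degree $n$ in $k$ and of degree $k-1$ in $n$; where the boundary is tight we retain the true expression $(\log k)^2 \log\log k$ and compute.

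Parts (i) and (v) treat $n = 1$, so $n\log n = 0$ and \eqref{eqn} collapses to $c_2 (\log k)^2 \log\log k \leq w (\log 3) k$. Setting $\phi(k) := (\log k)^2 \log\log k / k$, a short calculation shows $\phi$ is eventually decreasing, with its maximum on the integers $k \geq 4$ attained near $k = 15$, where $\phi(k) \approx 2$. Then $c_2 \phi(k) \leq 2.5 \log 3$ with margin, proving part (v). For part (i) the threshold $k \geq 222$ is sharp: at $k = 222$ one computes $c_2 \phi(222) \approx \log 3$ with essentially no slack, and monotonicity of $\phi$ handles larger $k$. Parts (ii) and (iv) treat $n = 2$, where \eqref{eqn} becomes $2 + 2 c_2 (\log k)^2 \log\log k \leq w (\log 3) k(k+1)/2$. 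Part (iv) is five numerical checks for $k \in \{4,5,6,7,8\}$, all valid with $w = 1.17$ (the tightest is $k = 5$, requiring $w \gtrsim 1.161$). For part (ii) with $w = 1$, verify the base case $k = 9$ by arithmetic, and observe that incrementing $k$ increases the right-hand side by $(\log 3)(k+1)$ but increases the left-hand side only by $O(\log k \log\log k / k)$, so the inequality persists for all $k \geq 9$.

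Part (iii) is the main case, covering all $n \geq 3$, $k \geq 4$. Set $g(n,k) := (\log 3) \binom{n+k-1}{k-1} - n\log n - c_2 n (\log k)^2 \log\log k$ and prove $g(n,k) \geq 0$ by induction on $k$. For the base case $k = 4$: the right-hand side is the cubic $(\log 3)(n+1)(n+2)(n+3)/6$, the left-hand side is the subquadratic $n\log n + 4 c_2 n$, and verifying $g(3,4) > 0$ (slack $\approx 3.5$) together with the cubic outgrowing the subquadratic for $n \geq 3$ gives $g(n,4) \geq 0$ for all $n \geq 3$. For the inductive step, $g(n,k+1) - g(n,k) = (\log 3) \binom{n+k-1}{k} - c_2 n \cdot \bigl[(\log(k+1))^2 \log\log(k+1) - (\log k)^2 \log\log k\bigr]$. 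Using $\binom{n+k-1}{k} \geq \binom{k+2}{k} = (k+1)(k+2)/2$ for $n \geq 3$, and bounding the bracketed increment by $O(\log k \log\log k / k)$ via the mean value theorem applied to $(\log x)^2 \log\log x$, the binomial term dominates for all $k \geq 4$.

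The main technical obstacle is the tightness of the extremal cases: part (i) is essentially sharp at $k = 222$, the base case of part (iii) is tight at $(n, k) = (3, 4)$, and the range $k \in [13, 18]$ in part (v) leaves comparatively little margin. Each case was clearly engineered so that the specific constant $c_2 = 1.954$ fixed in Theorem \ref{th:c2} just suffices, so the arithmetic verifications must be carried out to enough precision to confirm strict positivity, but no conceptual difficulty is expected beyond careful bookkeeping.
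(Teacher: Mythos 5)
Your treatment of parts (i), (ii), (iv), and (v) is sound and, if anything, more explicit than the paper's: the paper disposes of (iv) and (v) with an unexplained computer check, while you analyze $\phi(k)=(\log k)^2\log\log k/k$ directly, locate its integer maximum near $k=15$--$16$ (where $\phi\approx 2.0$, so $c_2\phi\approx 3.91<2.5\log 3\approx 3.96$), and note that $\phi$ is decreasing thereafter. Parts (i) and (ii) match the paper's reduction to single-variable boundary checks.

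The gap is in the inductive step of part (iii). You need $(\log 3)\binom{n+k-1}{k}\geq c_2\,n\,\Delta(k)$, where $\Delta(k)$ is the bracketed increment, but you replace $\binom{n+k-1}{k}$ by its value at $n=3$, namely $(k+1)(k+2)/2$. That lower bound is independent of $n$, whereas the right-hand side $c_2\,n\,\Delta(k)$ grows linearly in $n$, so the claimed dominance ``$(\log 3)(k+1)(k+2)/2\geq c_2\,n\,\Delta(k)$'' fails for moderate $n$: already at $n=5$, $k=4$ one has $\Delta(4)\approx 2.55$, the left side is $(\log 3)\cdot 15\approx 23.8$, while $c_2\cdot 5\cdot 2.55\approx 24.9$. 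The repair is to retain one factor of $n$: for $n\geq 3$, $k\geq 4$,
\[
\binom{n+k-1}{k}=\frac{(n+k-1)\cdots(n+1)\cdot n}{k!}\geq n\cdot\frac{(k+2)(k+1)\cdots 4}{k!}=\frac{n(k+1)(k+2)}{6},
\]
after which the $n$'s cancel and the resulting inequality $(\log 3)(k+1)(k+2)/6\geq c_2\,\Delta(k)$ holds for all $k\geq 4$, because $\Delta(k)$ is maximized near $k=5$ at about $2.6$ and $(\log 3)\cdot 5\cdot 6/6\approx 7.9>c_2\cdot 2.6\approx 5.1$. Your base case for $k=4$ also needs a short monotonicity argument on $n\mapsto g(n,4)$ rather than ``the cubic outgrows the subquadratic'', but that is a minor omission, not a gap. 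For comparison, the paper avoids induction altogether in (iii): it handles $n=3,4,5$ like the $n=2$ case, and for $n\geq 6$, $k\geq 5$ uses $\binom{n+k-1}{k-1}\geq\binom{n+k-1}{4}$ (possible since $k-1\geq 4$) together with Lemma~\ref{lem1} to reduce the claim to the explicit polynomial inequality $(n+k-1)(n+k-2)(n+k-3)(n+k-4)\geq 16n(n+k^2)$, proved by a factorization.
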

\begin{proof}
(i) Assume that $n=1$ and $w=1$. Then Inequality \eqref{eqn} is equivalent to \begin{equation}\label{eq4}c_2(\log k)^2\log\log k\le k\log 3.\end{equation}
Since $k\ge 4,$ we see that $\log k\le k$ and so $\log \log k\le \log k.$ Hence $c_2(\log k)^2\log\log k\le c_2(\log k)^3.$ Thus it suffices to show that $c_2(\log k)^3\le (\log 3)k$ or \[2^x\ge c_2x^3/\log 3\] where $x=\log k.$ Clearly, we can see that this inequality holds when $x\ge 11$ or equivalently $k\ge 2^{11}.$ For $k<2^{11},$ we can check that Inequality \eqref{eq4} holds provided that $k\ge 222.$

(ii) Assume that $n=2$ and $w=1$. Then Inequality \eqref{eqn} is equivalent to \begin{equation}\label{eq5} 2+2c_2(\log k)^2\log\log k\le (\log 3)k(k+1)/2.\end{equation}
Observe that $ 2+2c_2(\log k)^2\log\log k\le 2+2c_2(\log k)^3$ and $(\log 3)k(k+1)/2\ge 3k^2/4.$ So it suffices to show that $3k^2/4\ge 2+2c_2(\log k)^3.$ We can see that this inequality holds true when  $k\ge 32.$ For $4\le k<31,$ we can check that Inequality \eqref{eq5} holds only when $k\ge 9.$

(iii) Assume that $n\ge 3.$
Suppose first that $n=3.$ Arguing as in (ii), we see that Inequality \eqref{eqn}  is equivalent to \begin{equation}\label{eq6} 3\log 3+3c_2(\log k)^2\log\log k\le (\log 3)k(k+1)(k+2)/6.\end{equation}
Observe that $$ 3\log 3+3c_2(\log k)^2\log\log k\le 6+3c_2(\log k)^3$$ and $$(\log 3)k(k+1)(k+2)/6\ge k^3/4.$$ So it suffices to show that $k^3/4\ge 6+3c_2(\log k)^3.$  Clearly, the latter inequality holds true when $k\ge 8.$ For $4\le k<8,$ we can check directly that Inequality \eqref{eq5} holds.
The same argument can be applied for $n=4,5$ to show that Inequality \eqref{eqn} holds.

 So, assume that $n\ge 6.$ Assume next that $k=4.$ Then Inequality \eqref{eqn}  is equivalent to \begin{equation}\label{eq7} n\log n+4c_2n\le (\log 3)(n+1)(n+2)(n+3)/6.\end{equation}
Since $n\log n+4c_2n\le n^2+8n$ and $$\binom{n+3}{3}\log 3\ge (n+3)(n+2)(n+1)/4,$$ to prove Inequality \eqref{eq7}, it suffices to show that $4n(n+8)\le (n+3)(n+2)(n+1)$ which is always true as $n\ge 6.$
Therefore, one can assume that $n\ge 6$ and $k\ge 5.$

Since $k-1\ge 4,$ we deduce that $$\binom{n+k-1}{n}=\binom{n+k-1}{k-1}\ge \binom{n+k-1}{4}.$$ Hence, as $\log 3\ge 3/2,$ we have \[(\log 3)\binom{n+k-1}{k-1}\ge \frac{3}{2}\binom{n+k-1}{4}=\frac{(n+k-1)(n+k-2)(n+k-3)(n+k-4)}{16}.\]

Since $(\log k)^2\log\log k\le k^2/2$ by Lemma \ref{lem1} and $\log n\le n,$ we deduce that \[n \log n + c_{2} n {(\log k)}^{2} (\log \log k)\le n^2+nk^2.\]
Therefore, it suffices to show that \begin{equation}\label{eq8}{(n-1+k)(n-2+k)(n-3+k)(n+k-4)}\ge 16n(n+k^2).\end{equation}
Since $n+k-4\ge n,$ to prove \eqref{eq8}, it suffices to prove that \begin{equation}\label{eq9}{(n-1+k)(n-2+k)(n-3+k)}\ge 16n+16k^2.\end{equation}
We have that \[(n-1+k)(n-2+k)(n-3+k)=(n-1)(n-2)(n+k-3)+(2n-3)k(n+k-3)+k^2(n+k-3).\]
Since $n+k-3\ge n\ge 6,$ we have \begin{equation}\label{eq10}(n-1)(n-2)(n+k-3)\ge 5\cdot 4\cdot n=20n>16n.\end{equation}
Since $n+k-3\ge k\ge 5$ and $n\ge 6,$ we have \begin{equation}\label{eq11}k(2n-3)(n+k-3)\ge 9k^2
\end{equation}
and \begin{equation}\label{eq12}(n+k-3)k^2\ge (6+5-3)k^2=8k^{2}.
\end{equation}
Adding \eqref{eq11} and \eqref{eq12}, we obtain that \begin{equation}\label{eq13}k(2n-3)(n+k-3)+(n+k-3)k^2\ge 17k^2>16k^{2}.
\end{equation}
Now \eqref{eq9} follows by adding \eqref{eq10} and \eqref{eq13}.

Finally, (iv) and (v) can be checked using a computer.
\end{proof}

Using the information from Lemma \ref{lem:n=1} we define numbers $w_{i}$ for each $i$ with $1 \leq i \leq r$. If $n_{i} = 1$ and
$4 \leq k_{i} < 222$, then put $w_{i} =2.5$. If $n_{i}=2$ and $4 \leq k_{i} < 9$, then put $w_{i} = 1.17$. In all other cases put $w_{i} = 1$.
We need another lemma.

\begin{lem}
\label{Lem1}
Let $r$ be a positive integer and let $x_{1}, \ldots, x_{r}$ be integers which are at least $4$. Then the following are true.
\begin{enumerate}
\item[$(i)$] If $r \geq 3$ then $2.5 \cdot \sum_{i=1}^{r} x_{i} \leq \prod_{i=1}^{r} x_{i}$.

\item[$(ii)$] If $r = 2$ then $2.5 x_{1} + 1.17 x_{2} \leq x_{1} x_{2}$.
\item[$(iii)$] If $r=2$ and $x_i\ge 5,$ then $2.5 x_1+2.5x_2\le x_1x_2.$
\end{enumerate}
\end{lem}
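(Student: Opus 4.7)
The three inequalities are elementary, and I would handle $(ii)$, $(iii)$ by the same factoring trick and $(i)$ by a direct estimate on the largest $x_i$.

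For $(ii)$, I would rewrite
\[
x_{1}x_{2}-2.5\, x_{1}-1.17\, x_{2}\;=\;(x_{1}-1.17)(x_{2}-2.5)\;-\;1.17\cdot 2.5 ,
\]
so that the claim reduces to $(x_{1}-1.17)(x_{2}-2.5)\ge 2.925$. Using $x_{1}\ge 4$ and $x_{2}\ge 4$ one gets $(x_{1}-1.17)(x_{2}-2.5)\ge 2.83\cdot 1.5=4.245$, which is more than enough. For $(iii)$, the analogous identity
\[
x_{1}x_{2}-2.5\, x_{1}-2.5\, x_{2}\;=\;(x_{1}-2.5)(x_{2}-2.5)\;-\;6.25
\]
reduces the claim to $(x_{1}-2.5)(x_{2}-2.5)\ge 6.25$, which is immediate from $x_{i}\ge 5$ since each factor is at least $2.5$.

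For $(i)$, let $M=\max_{1\le i\le r}x_{i}$. On the one hand $\sum_{i=1}^{r} x_{i}\le rM$, and on the other hand, since every $x_{i}\ge 4$,
\[
\prod_{i=1}^{r} x_{i}\;\ge\; 4^{r-1}M.
\]
Combining these two estimates,
\[
\frac{\prod_{i=1}^{r} x_{i}}{\sum_{i=1}^{r} x_{i}}\;\ge\;\frac{4^{r-1}}{r}.
\]
Thus it suffices to verify $4^{r-1}/r\ge 2.5$ for every integer $r\ge 3$. At $r=3$ the left-hand side equals $16/3>5$, and the function $4^{r-1}/r$ is strictly increasing for $r\ge 3$ because $4^{r}/(r+1)\;>\;4^{r-1}/r$ is equivalent to $4r>r+1$. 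This settles $(i)$.

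There is no genuine obstacle here; everything follows from two lines of algebra plus a trivial monotonicity check, so the only care needed is to pair the coefficients $2.5$ and $1.17$ with the correct $x_{i}$ in the factoring step of $(ii)$ (since that inequality is not symmetric) and to verify that the bound $(4-1.17)(4-2.5)$ still exceeds $1.17\cdot 2.5$.
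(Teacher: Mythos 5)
Your proof is correct. The paper's proof is a one-line assertion --- ``(i) can be seen by induction on $r$; (ii) and (iii) are easy computations'' --- so there is essentially no written argument to compare against, and yours supplies the missing details. For (ii) and (iii) your factoring identities $(x_1-1.17)(x_2-2.5)=x_1x_2-2.5x_1-1.17x_2+2.925$ and $(x_1-2.5)(x_2-2.5)=x_1x_2-2.5(x_1+x_2)+6.25$ are exactly the kind of two-line computation the authors had in mind. For (i) you replace the paper's suggested induction on $r$ with a direct estimate: isolating $M=\max_i x_i$ gives $\prod x_i\ge 4^{r-1}M$ and $\sum x_i\le rM$, so the ratio is at least $4^{r-1}/r$, which exceeds $2.5$ for all $r\ge 3$ by a trivial monotonicity check. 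This is a genuinely different (non-inductive) route; the inductive version would instead need a base case at $r=3$ plus a step of the form $(x_{r+1}-1)\sum_{i\le r}x_i\ge x_{r+1}$, which is comparable in length. Both are equally elementary, but your one-shot estimate avoids the base-case bookkeeping.
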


\begin{proof}
(i) can be seen by induction on $r$. (ii) and (iii) are  easy computations.
\end{proof}

\begin{proof}[\textbf{Proof of Theorem \ref{th:base 3}}]
By Lemmas \ref{l3} and \ref{lem:n=1}, we have $$\log |G| < \sum_{i=1}^{r} \Big( n_{i} \log n_{i} + c_{2} n_{i} {(\log k_{i})}^{2} (\log \log k_{i}) \Big) \leq (\log 3) \sum_{i=1}^{r} w_{i} \binom{n_{i}+k_{i}-1}{k_{i}-1}.$$ By Lemma \ref{Lem1} and the fact that the binomial coefficients we consider are all at least $4$ (since $k_{i} \geq 4$ and $n_{i} \geq 1$ for every $i$ with $1 \leq i \leq r$), this is at most
$$\leq (\log 3) \prod_{i=1}^{r} \binom{n_{i}+k_{i}-1}{k_{i}-1} \leq (\log 3) k(G)$$ where the last inequality follows from Lemma \ref{l8}, unless possibly if one of the following cases holds.

\begin{enumerate}
\item $r = 1$, $n_{1} = 1$ and $4 \leq k_{1} < 222$;

\item $r = 1$, $n_{1} = 2$ and $4 \leq k_{1} < 9$; or

\item $r = 2$, $n_{1} = n_{2} = 1$ and $k_{1}= k_{2} =4$.
\end{enumerate}

In all cases the group $G$ has a socle which is the product of at most two non-abelian simple groups.

Case $r=1$ and $n_1=2.$ Observe that when $n_1=2,$ then Inequality \eqref{eqn} holds for simple groups $T$ with $\gamma(T)\le 1.613$  and $w_1=1.$ So, $\log |G|<(\log 3) k(G)$ whenever $\textrm{Soc}(G)\cong T^2$ and $T\not\cong \PSL_3(4),\Alt_5.$ For the remaining cases, we see that $$\textrm{Soc}(G)\cong T^2\unlhd G\le\Aut(T^2)\cong\Aut(T)\wr\Sym(2).$$ Now using \cite{Magma}, we can check that $\log |G|\le (\log 3)k(G).$

Case $r=2,$ $n_1=n_2=1$ and $k_1=k_2=4.$ Then $\textrm{Soc}(G)\cong T_1\times T_2$ and $$T_1\times T_2\unlhd G\le \Aut(T_1)\times \Aut(T_2),$$ where $T_i$ is a non-abelian simple group with $k_i=k^*(T_i)=4$ for $i =1$ and $2$. It follows from Theorem \ref{th:c2} that $T_i=\Alt_5$ with $i=1,2.$ Hence $\Alt_5^2\unlhd G\le \SSS_5\times \SSS_5.$ Using \cite{Magma} again, it is routine to check that $\log |G|\le (\log 3)k(G).$

Therefore, we are left with the case $r=1,n_1=1$ and $4\le k_1\le 221.$ So, $G$ is an almost simple group with non-abelian simple socle $T$ and
$4\le k=k^*(T)\le 221.$ Clearly, $\log |G|\le (\log 3)k(G)$ if $T\cong \Alt_5$ or $\PSL_3(4).$ So, we may assume that $T$ is not one of those groups.
Then $\gamma(T)<1.613$ by Theorem \ref{th:c2} ($\gamma(T)$ is defined in Section \ref{sec:c2}). We can now bound $k_1$ by $153$
(see the proof of Lemma \ref{lem:n=1}(i)).
We obtain the inequality $\log_{3} |G| \leq  k(G)$ by applying  Theorem~\ref{th:almost simple}.
As by assumption $G$ is not a $3$-group, the latter is a strict inequality and the result follows.
\end{proof}


\begin{thebibliography}{99}

\bibitem{BP} L. Babai and L. Pyber, Permutation groups without exponentially many orbits on the power set. \emph{J. Combin. Theory Ser. A} \textbf{66} (1994), no. 1, 160--168.

\bibitem{Bertram06} E. A. Bertram, Lower bounds for the number of conjugacy classes in finite groups, in:
 \emph{Ischia group theory 2004} Contemp. Math. 402 (Amer. Math. Soc., Providence, RI 2006), pp. 95--117.

\bibitem{Bertram} E. A. Bertram, New reductions and logarithmic lower bounds for the number of conjugacy classes in finite groups.
\emph{Bull. Aust. Math. Soc.} \textbf{87} (2013), no. 3, 406â€--424.

\bibitem {Magma} W. Bosma, J. Cannon, and C. Playoust,
\emph{The {\sc Magma} algebra system I: The user language},
J. Symbolic Comput. \textbf{24} (1997), 235--265.

\bibitem{Brauer} R. Brauer, Representations of finite groups. 1963 Lectures on Modern Mathematics, Vol. I pp. 133-â€-175 Wiley, New York.


\bibitem{C} M. Cartwright, The number of conjugacy classes of certain finite groups. \emph{Quart. J. Math. Oxford Ser.} (2) \textbf{36} (1985), no. 144, 393-â€-404.

\bibitem{Atlas} J. H. Conway, R. T. Curtis, S. P. Norton, R. A. Parker, and R. A. Wilson, \emph{An ATLAS of Finite Groups}, Clarendon Press, Oxford, 1985.

\bibitem{Dornhoff} L. Dornhoff,  \emph{Group representation theory. Part A: Ordinary representation theory. Pure and Applied Mathematics,} 7. Marcel Dekker, Inc., New York, 1971.

\bibitem{ET} P. Erd\H os and P. Tur\'an, On some problems of a statistical group-theory. IV.
\emph{Acta Math. Acad. Sci. Hungar} \textbf{19} (1968) 413-â€-435.

\bibitem{F} D. A. Foulser, Solvable primitive permutation groups of low rank. \emph{Trans. Amer. Math. Soc.} \textbf{143} (1969), 1--54.

\bibitem{FG} J. Fulman, R. Guralnick, Bounds on the number and sizes of conjugacy classes in finite Chevalley groups with applications to derangements, \emph{Trans. Amer. Math. Soc.}  \textbf{364}  (2012),  no. 6, 3023--3070.

\bibitem{GAP} The GAP Group, \emph{GAP -- Groups, Algorithms, and Programming, Version 4.7.5}, 2014, \texttt{(http://www.gap-system.org)}.

\bibitem{JZ} A. Jaikin-Zapirain, On the number of conjugacy classes of finite nilpotent groups. \emph{Adv. Math.} \textbf{227} (2011), no. 3, 1129--1143.

\bibitem{Keller} T. M. Keller, Finite groups have even more conjugacy classes. \emph{Israel J. Math.} \textbf{181} (2011), 433-–444.

\bibitem{Landau} E. Landau, \"Uber die Klassenzahl der bin\"aren quadratischen Formen von negativer Discriminante. \emph{Math. Ann.} \textbf{56} (1903), no. 4, 671--€"676.

\bibitem{Lubeck} F. L\"{u}beck, Character dgrees and their multiplicities for some groups of Lie type of rank $<9,$ \texttt{http://www.math.rwth-aachen.de/~Frank.Luebeck/chev/DegMult/index.html.}


\bibitem{M} A.  Mar\'{o}ti, On elementary lower bounds for the partition function, \emph{Integers} \textbf{3} (2003), A10, 9 pp.

\bibitem{Newman} M. Newman, A bound for the number of conjugacy classes in a group. \emph{J. London Math. Soc.} \textbf{43} (1968), 108-â€-110.

\bibitem{Py} L. Pyber, Finite groups have many conjugacy classes. \emph{J. London Math. Soc.} (2) \textbf{46} (1992), no. 2, 239--249.

\end{thebibliography}
\end{document}